\newcommand{\E}{{\bf{E}}}
\newcommand{\PP}{{\bf{P}}}
\newcommand{\Var}{{\bf{Var}}}
\newtheorem{tm}{Theorem}
\newtheorem{lem}{Lemma}
\newtheorem{cor}{Corollary}
\newtheorem{rem}{Remark}
\newcommand{\JKcorr}[1]{\textcolor{red}{#1}}
\begin{document}

\parindent=0pt

\smallskip
\par\vskip 3.5em
\centerline{\Large \bf Normal and stable approximation to subgraph counts}
\smallskip
\centerline{\Large \bf in superpositions of Bernoulli random graphs}
\vglue2truecm

\vglue1truecm
\centerline{Mindaugas Bloznelis$^{1}$, and
Joona Karjalainen$^{2}$, and Lasse Leskel\"a$^{2}$}

\smallskip

\centerline{$^1$Institute of Informatics, Vilnius University}
\centerline{
\, \ Naugarduko 24, LT-03225 Vilnius, Lithuania} 

\smallskip
	
\centerline{$^2$Department of Mathematics and Systems Analysis}
\centerline{\ \ School of Science, Aalto University}
\centerline{Otakaari 1, FI-02150 Espoo, Finland}

\vglue2truecm

\abstract{
The
 clustering property of  complex networks
 indicates the abundance of 
 small dense subgraphs in otherwise sparse networks. 
 For a community-affiliation network defined by  a superposition of Bernoulli random graphs, which has a  nonvanishing global clustering coefficient and a power-law degree distribution,
 we establish  normal and $\alpha$--stable approximations to the number of small cliques, cycles and more general $2$-connected subgraphs.}

\vglue2truecm


\section{Introduction and results}

Mathematical modeling of complex networks aims at explaining and reproducing characteristic properties of large real-world networks, e.g., power-law degree distributions and clustering. By clustering property we  mean  the tendency of nodes to cluster together by forming relatively small groups with a high density of ties within a group. Locally, in the vicinity of a vertex, clustering can be measured by the local clustering coefficient, the probability that two randomly selected neighbors of the vertex are adjacent. Globally, the fraction of wedges (paths of length $2$) that induce triangles  defines the global clustering coefficient, which represents the  probability that endpoints of a randomly selected wedge are adjacent. Non-vanishing clustering coefficients indicate the abundance of triangles and other small dense subgraphs in otherwise sparse networks. 
 The problem of determining the asymptotic distribution
 of dense subgraph counts in  sparse complex networks
  is of considerable interest, but there are very few results obtained  so far.
 

 In the present study we establish  normal and $\alpha$-stable approximations of the numbers of $k$-cliques, $k$-cycles and more general $2$-connected subgraphs in a community-affiliation network model defined by a superposition of Bernoulli random graphs \cite{Lasse_Mindaugas2019, Yang_Leskovec2012, Yang_Leskovec2014}.
 
 To the best of our knowledge this is the first
 systematic 
 study of an $\alpha$-stable approximation to subgraph counts in a theoretical model of sparse affiliation networks. 
 We note that in the network model considered, the clustering property and the power-law degree distribution, two basic properties of complex networks, are essential  for an $\alpha$-stable limit to emerge. 
   
 \subsection{Network model}
 Let $(X,Q)$ be a random vector with values in $\{0,1,2, \dots\}\times [0,1]$ and let
 \begin{displaymath}
\bigl\{G(x,p): \ \, x\in\{1,2\dots\}, \, p\in [0,1] \bigr\}
\end{displaymath}
 be a family of Bernoulli random graphs independent of $(X,Q)$.  We set $[x]=\{1,2,\dots, x\}$ to be  the vertex set of $G(x,p)$. 
 Recall that in $G(x,p)$ 
 every pair of vertices
 $\{i,j\}\subset [x]$ is declared adjacent independently at random with probability $p$.
 For notational convenience we introduce the empty graph $G_{\emptyset}$ having no vertices and set $G(0,p)=G_{\emptyset}$ for any $p\in [0,1]$. We define the mixture of Bernoulli random graphs $G(X,Q)$ in a natural way. 

\medskip

Let $(X_1,Q_1), (X_2,Q_2),\dots$ be a sequence of independent 
copies of $(X,Q)$. Given  $X_1,\dots$, $X_m$,
 let ${\cal V}_{n,i}={\cal V}_{n,i}(X_i)$, $1\le i\le m$, be 
 independent random subsets of $[n]$ defined as follows. 
 For $X_i\le n$ we select ${\cal V}_{n,i}$ uniformly at random from 
 the class of subsets of $[n]$ of size $X_i$. 
 For $X_i>n$ we set ${\cal V}_{n,i}=[n]$. 
 We denote ${\tilde X}_i=|{\cal V}_{n,i}|=X_i\wedge n$. 
 Let $G_{n,i}$, $1\le i\le m$, be independent random graphs with
 vertex sets ${\cal V}_{n,i}$ defined as follows. We obtain $G_{n,i}$ 
 by \JKcorr{a} one-to-one mapping of vertices of 
 $G({\tilde X}_i,Q_i)$ to the elements of ${\cal V}_{n,i}$ 
 and \JKcorr{by} retaining the adjacency relations of 
 $G({\tilde X}_i,Q_i)$.
 We denote by ${\cal E}_{n,i}$ the edge set of $G_{n,i}$. Finally, let 
 $G_{[n,m]}=(V, {\cal E})$ be the random graph with the vertex set 
 $V=[n]$ and edge set 
 ${\cal E}={\cal E}_{n,1}\cup\cdots\cup {\cal E}_{n,m}$. Therefore 
 $G_{[n,m]}$ is the superposition of 
 $G_{n,1}$, $\dots$, $G_{n,m}$. We call the contributing random 
 graphs $G_{n,i}$ layers or communities.

\medskip
 
 

The random graph $G_{[n,m]}$  represents a null model of a community-affiliation network
 \cite{Yang_Leskovec2012, Yang_Leskovec2014},
 which has attracted considerable attention in the literature.
In the particular case where $Q\equiv 1$ the random graph $G_{[n,m]}$
goes back to 
  the `passive' model of random intersection graph
 \cite{GodehardJaworski2001}. 
In the  parameter 
regime $m=\Theta(n)$ as $m,n\to+\infty$ the random graph $G_{[n,m]}$ admits 
a
power-law degree distribution with tunable power-law exponent,  nonvanishing global clustering coefficient and  
 tunable clustering spectrum \cite{Lasse_Mindaugas2019}.
 Moreover, it admits
 a limiting bidegree distribution with (stochastically dependent) power-law marginals   shown in 
 \cite{Joona_Lasse_Mindaugas2021}.
 The present paper continues the study of the random graph $G_{[n,m]}$ and focuses on the asymptotic distributions of (dense) subgraph counts.

\subsection{Results}

Let $F=({\cal V}_F,{\cal E}_F)$ be a graph with vertex set ${\cal V}_F$ and edge set ${\cal E}_F$. We denote $v_F=|{\cal V}_F|$ and $e_F=|{\cal E}_F|$. We assume in what follows that $F$ is $2$-connected. That is, $F$ is connected and, moreover,  it stays connected even if we remove any one of its vertices. We call $F$ balanced if $e_F/v_F=\max\{e_H/v_H:\, H\subset F$ with $e_H\ge 1\}$. For example, the cycle ${\cal C}_k$  and clique ${\cal K}_k$ (where $k$ stands for the number of vertices) are $2$-connected and balanced. 
Let
 $N_F$ be the number of copies of $F$ in $G(X,Q)$. Denote  
 $\sigma^2_F=\Var N_F$, the variance of $N_F$. We write 
 $\sigma^2_F<\infty$ if the variance is finite and 
 $\sigma^2_F=\infty$ otherwise. 
 We use the shorthand notation 
 $N_F^*:=\E(N_F|X,Q)
 =a_F\binom{X}{v_F}Q^{e_F}$, where $a_F$ stands for 
 the number of distinct copies of $F$ in the complete graph on $v_F$ vertices.
 We have, for example, that
   $N_{{\cal C}_k}^*=(X)_kQ^k/(2k)$  and 
   $N_{{\cal K}_k}^*=(X)_kQ^{\binom{k}{2}}/k!$. 
   Here and below
  $(x)_k=x(x-1)\cdots(x-k+1)$ denotes the falling factorial.

Let ${\cal N}_F$ be the number of copies of  $F$  in $G_{[n,m]}$.
Our first  result establishes the asymptotic normality  of 
${\cal N}_F$ .


 \begin{tm}\label{normal_limit_B}
 Let $\nu>0$. Let $n,m\to+\infty$ and  assume that  $m/n\to\nu$.
  Let $F$ be a $2$-connected graph with $v_F\ge 3$ vertices. 
  Assume that  $\E X<\infty$ and  $0< \sigma^2_{F}<\infty$.
  Assume, in addition, that
  \begin{equation}
\label{2021-06-29}
\E \left(X^{1+s\left(1-\frac{1}{2e_F}\right)}Q^{s}\right)<\infty, \qquad
{\text{for each}}\qquad
 s=1,2,\dots, v_F-1.
\end{equation}
Then 
  $({\cal N}_F-\E {\cal N}_F)/(\sigma_{F}\sqrt{m})$ converges in distribution to  the standard normal distribution. 
\end{tm}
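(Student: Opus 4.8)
The plan is to reduce the count ${\cal N}_F$ to a sum of independent per-community contributions and to show that copies of $F$ split across two or more communities are negligible. Write $N_{F,i}$ for the number of copies of $F$ all of whose edges lie in the single layer $G_{n,i}$. Since the layers $G_{n,1},\dots,G_{n,m}$ are mutually independent and each $N_{F,i}$ is a functional of $(X_i,Q_i,{\cal V}_{n,i})$ and the Bernoulli edges of layer $i$ alone, the variables $N_{F,1},\dots,N_{F,m}$ are i.i.d., each distributed as the number of copies of $F$ in $G(\tilde X,Q)$ with $\tilde X=X\wedge n$. Set ${\cal S}=\sum_{i=1}^m N_{F,i}$. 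The heart of the argument is the estimate
\[
\E|{\cal N}_F-{\cal S}|=o(\sqrt m),
\]
after which the theorem follows from a classical central limit theorem for ${\cal S}$ together with Slutsky's lemma.

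First I would treat the i.i.d. sum ${\cal S}$. Coupling $G(\tilde X,Q)\subseteq G(X,Q)$ through a common infinite Bernoulli graph shows $N_{F,i}\uparrow N_F$ as $n\to\infty$, so by monotone convergence $\E N_{F,i}\to\E N_F$ and $\Var N_{F,i}\to\sigma_F^2\in(0,\infty)$, using $\sigma_F^2<\infty$. Because the centred summands are dominated by the $n$-independent, square-integrable variable $N_F$, the Lindeberg condition for the triangular array holds: the truncation level $\varepsilon\sigma_F\sqrt m$ tends to infinity while the corresponding truncated second moment tends to $0$ by dominated convergence. Hence $({\cal S}-\E{\cal S})/(\sigma_F\sqrt m)$ converges to the standard normal law.

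The main obstacle is the bound $\E|{\cal N}_F-{\cal S}|=o(\sqrt m)$. Writing both counts as sums of indicators over embeddings $\psi$ of $F$ into $[n]$, the difference ${\cal N}_F-{\cal S}$ collects two nonnegative defects: copies $\psi$ fully present in two or more layers (over-counted in ${\cal S}$) and copies whose edges are covered only by a union of $\ge2$ layers (missing from ${\cal S}$); I would bound the expectation of each by a union bound over the finitely many ways of distributing ${\cal E}_F$ among the contributing communities. Here $2$-connectedness is decisive: if two communities jointly realise a single copy, the connected sub-parts $H_A,H_B\subset F$ they contribute must, in the absence of a cut vertex, share at least two vertices of $\psi$, so their vertex overlap satisfies $c=v_{H_A}+v_{H_B}-v_F\ge2$. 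For one community carrying a connected sub-part $H$ the relevant factor is $\E[(\tilde X)_{v_H}Q^{e_H}]/(n)_{v_H}$; using $Q\le1$ to replace $Q^{e_H}$ by $Q^{v_H-1}$ and the elementary bound $(\tilde X)_{v_H}\le n^{v_H-\gamma}X^{\gamma}$ (valid since $\tilde X\le\min(X,n)$) with $\gamma=1+(v_H-1)\bigl(1-\tfrac1{2e_F}\bigr)$, the moment assumption \eqref{2021-06-29} with $s=v_H-1\le v_F-1$ makes this factor $o\bigl(n^{-v_H+(v_H-1)/(2e_F)}\bigr)$. Multiplying the $O(n^{v_F})$ choices of vertices, the $O(n^2)$ ordered pairs of communities and two such factors gives, in the worst case $c=2$, a two-community contribution $o\bigl(n^{\,v_F/(2e_F)}\bigr)$; since $e_F\ge v_F$ for any $2$-connected $F$, this is $o(\sqrt n)=o(\sqrt m)$, with cycles sitting exactly at the boundary and rescued by the \emph{strict} finiteness in \eqref{2021-06-29}, which upgrades the crude $O$ to a genuine $o$. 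Configurations with more communities, or with two communities each carrying the whole copy, only force additional overlaps and are smaller still; in particular the over-counting term is $O(n^{2-v_F})$ and vanishes for $v_F\ge3$.

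Finally I would combine the pieces. Since $\E|{\cal N}_F-{\cal S}|=o(\sqrt m)$ controls both the centring, $\E{\cal N}_F-\E{\cal S}=o(\sqrt m)$, and the fluctuation, $\E|({\cal N}_F-{\cal S})-\E({\cal N}_F-{\cal S})|\le2\E|{\cal N}_F-{\cal S}|=o(\sqrt m)$, the decomposition
\[
\frac{{\cal N}_F-\E{\cal N}_F}{\sigma_F\sqrt m}=\frac{{\cal S}-\E{\cal S}}{\sigma_F\sqrt m}+\frac{({\cal N}_F-{\cal S})-\E({\cal N}_F-{\cal S})}{\sigma_F\sqrt m}
\]
has a second summand tending to $0$ in $L^1$, hence in probability, and a first summand converging to the standard normal law, so Slutsky's lemma yields the claim. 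I expect the bookkeeping of the partition types, together with the uniform control of the truncated moments across all connected sub-parts, to be the most delicate part of the write-up.
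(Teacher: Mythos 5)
Your overall strategy is the same as the paper's: split ${\cal N}_F$ into a sum of i.i.d.\ per-layer counts plus a ``polychromatic'' defect, show the defect has expectation $o(\sqrt m)$, and apply a CLT. Your handling of the truncation via a Lindeberg triangular array (rather than the paper's route of showing $\PP\{\max_i X_i>n\}=o(1)$ and passing to the untruncated sum) is a legitimate minor variation, and your worst-case computation for two communities each contributing a \emph{connected} piece with vertex overlap $c=2$ correctly reproduces the critical exponent $v_F/(2e_F)\le 1/2$.

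There is, however, a genuine gap in the key estimate, precisely where the paper has to work hardest. A community contributing edges to a split copy of $F$ need not contribute a \emph{connected} subgraph: the color class $B_j\subset{\cal E}_F$ can induce a graph $Z_j$ with $\rho_j\ge 2$ components (e.g.\ for $F={\cal K}_4$, one layer may supply the two disjoint edges $\{12\},\{34\}$ and another the remaining four). For such a part your substitution $Q^{e_H}\le Q^{v_H-1}$ is invalid, since $e_H\ge v_H-\rho_H$ can be strictly less than $v_H-1$, so the moment condition (\ref{2021-06-29}) must be invoked with $s=v_H-\rho_H$ rather than $s=v_H-1$; this weakens the per-part gain by a factor $n^{(\rho_H-1)(1-\frac{1}{2e_F})}$, and your ``share at least two vertices'' argument no longer applies. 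The assertion that configurations with more communities or disconnected contributions ``only force additional overlaps and are smaller still'' is exactly the statement that needs proof: one must show that for any partition ${\cal E}_F=B_1\cup\cdots\cup B_r$ the total overlap compensates the weakened moment exponents, which amounts to the inequality $v_1+\cdots+v_r\ge v_F+\rho_1+\cdots+\rho_r$ of (\ref{2021-06-28+4}). That inequality is the combinatorial heart of Lemma \ref{two_connected}; its proof is an induction over the components $H_1,\dots,H_t$ of the $Z_j$'s in which $2$-connectedness is used at the final step (the last component must meet \emph{every} component of the partial union in at least two vertices). Your proposal establishes only its $r=2$, $\rho_1=\rho_2=1$ instance, so as written the bound $\E|{\cal N}_F-{\cal S}|=o(\sqrt m)$ is not proved; once (\ref{2021-06-28+4}) is supplied, your exponent bookkeeping closes in all cases.
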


\begin{rem}\label{remark-1}
 For a balanced graph $F$, the finite variance condition $\sigma_F^2<\infty$ is equivalent to the second moment condition $\E(N_F^*)^2<\infty$. 
 In particular, we have  
 \begin{equation}\label{2021-06-23}
 \sigma_F^2<\infty\Leftrightarrow \E(X^{2v_F}Q^{2e_F})<\infty.
\end{equation}
\end{rem}

\begin{rem}\label{remark-clique_normal}
In the special case, where $F$ is a clique on $k\ge 3$ vertices ($F={\cal K}_k$)  condition (\ref{2021-06-29})
can be replaced by the following one
 \begin{equation}\label{2021-06-22+11}
\E \left(X^{r-\frac{{\hat r}}{k(k-1)}}Q^{\hat r}\right) < \infty, \qquad 
{\text{for each}}\qquad
r=2,\dots, k.
\end{equation}
where we denote ${\hat r}:=\binom{r-1}{2}+1$.
\end{rem}

The proof of Remarks
 \ref{remark-1} and \ref{remark-clique_normal}
 is presented in Section 2. Let us briefly explain the result and conditions of Theorem \ref{normal_limit_B}. Let $N_{F,i}$ be the number of copies of $F$ in $G(X_i,Q_i)$ and define 
 $S_F=N_{F,1}+\cdots+N_{F,m}$. The moment condition $\E X<\infty$ and the assumption $m\approx \nu n$ 
 control the amount of overlap between the different layers 
 $G_{n,i}$
 and ensures that (the  layer sizes)  
 ${\tilde X}_i=X_i$, $1\le i\le m$ with high probability. 
 The principal contribution to the subgraph count  ${\cal N}_F$ comes from the subgraph counts $N_{F,i}$ of individual layers (recall that $F$ is $2$-connected). Therefore we have ${\cal N}_F\approx S_F$. To make this approximation rigorous we introduce  conditions
 (\ref{2021-06-29}) and (\ref{2021-06-22+11}) 
  aimed at controlling the number of overlaps of different copies of $F$ in $G_{[n,m]}$. 
 (The combinatorial origin of  (\ref{2021-06-29}), (\ref{2021-06-22+11})
 is explained  in Lemmas  
\ref{two_connected} -- \ref{h_complete}). Finally, the asymptotic normality of ${\cal N}_F$ follows from the asymptotic normality of $S_F$. The latter is guaranteed by the second moment condition $\sigma_F^2<\infty$.

In the case where  $F$ is balanced and 
the random variable $N_F^*$ has an infinite second moment, we can obtain an $\alpha$-stable limiting distribution for the subgraph count ${\cal N}_{F}$. In Theorem \ref{stable_limit_B} below we assume that for some $a>0$ and $0<\alpha<2$, we have
\begin{equation}
\label{2021-06-23+1}
\PP\{N_F^*>t\}=(a+o(1))t^{-\alpha} \qquad {\text{as}} \quad t\to+\infty.
\end{equation}
Let $N_{F,i}^*=\E(N_F|X_i, Q_i)$, $1\le i\le m$, be iid copies of $N_F^*$ and put $S_F^*=N_{F,1}^*+\cdots+N_{F,m}^*$. It is well known (Theorem 2 in $\S$ $35$ of \cite{GnedenkoKolmogorov}) that the distribution of $m^{-1/\alpha}(S_F^*-B_m)$ converges to a stable distribution, say $G_{\alpha,a}$, which is defined by $a$ and $\alpha$. Here $B_m=m\E N_F^*=\E N_F$ for $1<\alpha <2$ and $B_m\equiv 0$ for $0<\alpha<1$. For $\alpha=1$ we have $B_m=c^{\star}_{\alpha,a}\ln m$, where the constant $c^{\star}_{\alpha,a}>0$ depends on $a$ and $\alpha$.

Our second  result establishes an $\alpha$-stable aproximation to the distribution  of 
${\cal N}_F$.
\begin{tm}\label{stable_limit_B} 
Let $\nu>0$. Let $n,m\to+\infty$ and  assume that  $m/n\to\nu$.
Let $F$ be a balanced and $2$-connected graph with $v_F\ge 3$ vertices.  Let $a>0$ and $0<\alpha<2$. 
 Assume that  $\E X<\infty$ and that (\ref{2021-06-23+1}) holds. 
 Assume, in addition, that
 \begin{equation}
\label{2021-06-29_stable}
\E \left(X^{1+s\left(1-\frac{1}{\alpha \,e_F}\right)}Q^{s}\right)<\infty
\qquad
{\text{for each}}
\qquad
s=1,\dots,v_F-1.
\end{equation}
 Then $({\cal N}_F-B_m)/m^{1/\alpha}$ converges in distribution to  $G_{\alpha,a}$.
\end{tm}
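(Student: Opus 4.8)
The plan is to reduce the subgraph count ${\cal N}_F$ in the superposition to the sum $S_F^*$ of the conditional-mean random variables $N_{F,i}^*$, and then invoke the classical stable limit theorem already quoted from \cite{GnedenkoKolmogorov}. Since that theorem hands us convergence of $(S_F^*-B_m)/m^{1/\alpha}$ to $G_{\alpha,a}$ directly from the tail hypothesis (\ref{2021-06-23+1}), the entire mathematical content lies in showing
\begin{equation}
\label{plan-goal}
\frac{{\cal N}_F-S_F^*}{m^{1/\alpha}}\xrightarrow{\;\PP\;}0.
\end{equation}
Once (\ref{plan-goal}) is established, the conclusion follows from Slutsky's theorem, bearing in mind that $B_m=\E N_F=m\E N_F^*=\E S_F^*$ in the regime $1<\alpha<2$, and the appropriate centering in the cases $\alpha\le 1$.

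\textbf{Decomposition.}
First I would split the difference ${\cal N}_F-S_F^*$ into two telescoping pieces through the intermediate layer count $S_F=N_{F,1}+\cdots+N_{F,m}$, writing ${\cal N}_F-S_F^*=({\cal N}_F-S_F)+(S_F-S_F^*)$. Because $F$ is $2$-connected, any copy of $F$ in $G_{[n,m]}$ that is \emph{not} confined to a single layer must have its edges distributed among at least two layers in a way that forces a shared sub-structure across layers; the combinatorics of such cross-layer copies is exactly what Lemmas~\ref{two_connected}--\ref{h_complete} are designed to bound. The term ${\cal N}_F-S_F$ counts precisely these cross-layer copies (together with a correction coming from the truncation event ${\tilde X}_i<X_i$), and I would show that $\E|{\cal N}_F-S_F|=o(m^{1/\alpha})$. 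The moment condition (\ref{2021-06-29_stable}), with its exponent $1+s(1-\tfrac{1}{\alpha e_F})$, is calibrated so that each contribution indexed by the number $s$ of shared vertices has expectation of strictly smaller order than $m^{1/\alpha}$; here the truncation to ${\tilde X}_i=X_i\wedge n$ is controlled using $\E X<\infty$ and $m\asymp\nu n$, ensuring layer sizes equal $X_i$ with high probability.

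\textbf{The within-layer fluctuation.}
The term $S_F-S_F^*=\sum_{i=1}^m(N_{F,i}-N_{F,i}^*)$ is a sum of i.i.d.\ centered random variables, namely the within-layer deviations of the copy count from its conditional mean given $(X_i,Q_i)$. The key point is that, conditionally on $(X_i,Q_i)$, the variance $\Var(N_{F,i}\mid X_i,Q_i)$ is of \emph{smaller order} than $(N_{F,i}^*)^2$ for large layers, so the heavy tail of the sum is governed entirely by $S_F^*$ and the centered remainder $S_F-S_F^*$ contributes only at the scale of a central limit theorem, i.e.\ $O_\PP(\sqrt{m})=o(m^{1/\alpha})$ since $\alpha<2$. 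Making this precise requires bounding $\E(S_F-S_F^*)^2=m\,\E\Var(N_{F}\mid X,Q)$; because $F$ is balanced, the mixed moments that appear are dominated by $\E(X^{2v_F-s}Q^{2e_F-\cdot})$-type quantities that (\ref{2021-06-29_stable}) keeps finite, and one checks this is $o(m^{2/\alpha})$.

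\textbf{Main obstacle.}
The hard part will be the cross-layer term ${\cal N}_F-S_F$: one must enumerate all ways a copy of $F$ can split across two or more layers, classify them by the isomorphism type of the intersection subgraph and by the number $s$ of shared vertices, and verify in each case that the expected count is $o(m^{1/\alpha})$ under exactly the exponent in (\ref{2021-06-29_stable}). The $2$-connectedness of $F$ is what guarantees that a genuinely split copy forces each fragment to carry enough edges relative to its vertices, so that the fragment probabilities $Q^{\cdot}$ and occupancy factors $\binom{X}{\cdot}$ combine to give the stated exponent $1-\tfrac{1}{\alpha e_F}$; the balancedness is used to ensure no denser subgraph $H\subset F$ produces a larger-order contribution. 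I expect the bookkeeping in Lemmas~\ref{two_connected}--\ref{h_complete} to supply these bounds, so that the remaining task is to assemble them and confirm the cumulative estimate stays below $m^{1/\alpha}$.
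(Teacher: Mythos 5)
Your reduction of ${\cal N}_F$ to the layer counts $S_F$ via the cross-layer (polychromatic) copies and the truncation event matches the paper's route, and your reading of how Lemmas \ref{two_connected}--\ref{h_complete} and condition (\ref{2021-06-29_stable}) control that term is accurate. The gap is in your treatment of the within-layer fluctuation $S_F-S_F^*$. You propose to bound $\E(S_F-S_F^*)^2=m\,\E\Var(N_F\mid X,Q)$ and conclude $S_F-S_F^*=O_\PP(\sqrt{m})$, but $\E\Var(N_F\mid X,Q)$ is in general \emph{infinite} under the hypotheses of the theorem. Indeed, by Lemma 3.5 of \cite{JansonLuczakRucinski2001}, $\E^*(\Delta_F^*)^2\asymp (N_F^*)^2(1-Q)/\Phi_F(X,Q)$; taking $Q\equiv q\in(0,1)$ constant and $X$ with tail index $\gamma=\alpha v_F$ (which satisfies (\ref{2021-06-23+1})), the minimum defining $\Phi_F$ is attained at a single edge for large $X$, so $\E^*(\Delta_F^*)^2\asymp X^{2v_F-2}q^{2e_F-1}(1-q)$, and $\E X^{2v_F-2}=\infty$ whenever $\alpha\le 2-2/v_F$. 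For triangles this already fails for all $\alpha\le 4/3$. The moment condition (\ref{2021-06-29_stable}) involves exponents of $X$ at most $v_F$, far below the $2v_F-2$ your Chebyshev step would require, so it cannot rescue the argument.

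The paper avoids this entirely: it never approximates $S_F$ by $S_F^*$ in $L^2$. Instead, Lemma \ref{lemma_stable} shows that $N_F$ itself obeys the tail asymptotics $\PP\{N_F>t\}=(a+o(1))t^{-\alpha}$, by splitting $N_F=N_F^*+\Delta_F^*$ and controlling the deviation $\Delta_F^*$ \emph{conditionally on} $(X,Q)$ with exponential concentration inequalities --- Janson's inequality for the lower tail and the Janson--Oleszkiewicz--Ruci\'nski bounds of \cite{JansonORucinski} for the upper tail --- combined with a truncation on the size of $\Psi_F(X,Q)$. This yields tail equivalence of $N_F$ and $N_F^*$ without any second-moment requirement, and the generalized central limit theorem is then applied directly to the iid sum $S_F$. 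To repair your proof you would need to replace the variance bound on $S_F-S_F^*$ by an argument of this conditional large-deviation type (or equivalently import Lemma \ref{lemma_stable}); as written, the step fails for a nontrivial range of $\alpha$ covered by the theorem.
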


\begin{rem}\label{remark-clique_stable}
In the special case, where $F$ is a clique on $k\ge 3$ vertices ($F={\cal K}_k$) 
condition (\ref{2021-06-29_stable})
can be replaced by the following one
 \begin{equation}\label{2021-06-23+3}
 \E \left(X^{r-{\hat r}\frac{2}{\alpha k(k-1)}}Q^{\hat r}\right) < \infty
  \qquad 
{\text{for each}}\qquad
r=2,\dots, k.
\end{equation}
where  ${\hat r}=\binom{r-1}{2}+1$.
\end{rem}

 The result of Theorem \ref{stable_limit_B}  is obtained using the same approximation ${\cal N}_F\approx S_F$ as above. In addition, we use the
 observation that
condition (\ref{2021-06-23+1}) implies $S_F\approx S_F^*$.
To make this approximation rigorous we apply
 exponential large deviation bounds  \cite{JansonORucinski}
 to individual subgraph counts $N_{F,i}$ conditionally given $(X_i,Q_i)$,
 see Lemma \ref{lemma_stable}. The $\alpha$-stable limit 
  of
 $S_F^*$ is now guaranteed by condition (\ref{2021-06-23+1}) and Theorem 2 in $\S$ $35$ of \cite{GnedenkoKolmogorov}.

We briefly comment on technical  conditions  (\ref{2021-06-29}), (\ref{2021-06-22+11}),  (\ref{2021-06-29_stable}) and
(\ref{2021-06-23+3}).
The mixed moments defined 
there
appear in our  upper bounds on the expected number of
 overlaps of different copies of $F$ in $G_{[n,m]}$, see  Lemmas \ref{two_connected}, \ref{h_complete} and inequality (\ref{2021-03-18+2}) in the proof below. More precisely,
 we
 use these 
 moments to upper bound the quantity $h_F$ of (\ref{2021-03-18+2}).
 Alternatively, one can impose conditions on the rate of decay of $h_F$ directly.
 We note that for particular graphs $F$ the moment conditions
  (\ref{2021-06-29}), (\ref{2021-06-22+11}),  (\ref{2021-06-29_stable}),
(\ref{2021-06-23+3}) can be relaxed.

Let us examine  Theorems \ref{normal_limit_B}
and 
 \ref{stable_limit_B} 
in
 the special case where the marginals $X, Q$ of $(X,Q)$
are independent
 and $\PP\{Q>0\}>0$.  
 We first consider Theorem
  \ref{normal_limit_B}.
 The
 finite variance condition 
$\sigma_F^2<\infty$  
of Theorem \ref{normal_limit_B}
reduces to the moment condition 
$\E X^{2v_F}<\infty$.
Indeed, by the simple inequality $N_F\le (X)_{v_F}$, we have
that
$\E X^{2v_F}<\infty\Rightarrow \E N_F^2<\infty\Rightarrow \sigma_F^2<\infty$. On the other hand, by the variance identity
$\Var N_F=\Var N_*+\E(\Var( N_F|X,Q))$, we have that
$\sigma_F^2<\infty\Rightarrow \E (N_*)^2<\infty$, where
 the latter inequality (for independent $X$ and $Q$)
  implies $\E X^{2v_F}<\infty$.
 Moreover,  
the moment condition $\E X^{2v_F}<\infty$  implies  (\ref{2021-06-29}). Therefore 
Theorem \ref{normal_limit_B} establishes the asymptotic normality under the minimal second moment condition $\sigma_F^2<\infty$.

 We now turn to Theorem \ref{stable_limit_B}.
 For independent $X$ and $Q$
 condition  (\ref{2021-06-23+1}) of
  Theorem \ref{stable_limit_B}
  is equivalent to the condition
\begin{equation}
\label{2021-07-05}
\PP\{X>t\}=(b+o(1))t^{-\gamma} \qquad {\text{as}} \qquad t\to+\infty,
\end{equation}
where $\gamma=\alpha v_F$ and where $b$ solves the equation
$a=b(a_F/v_F!)^{\gamma/v_F}\E Q^{\gamma e_F/v_F}$.
Note that  $\E X<\infty$ implies
$\gamma>1$. Furthermore, the inequality $v_F\le e_F$ (which holds for any 2-connected $F$ with $v_F\ge 3$) combined with 
$\gamma>1$
implies 
$\alpha e_F>1$.
Observe that   for $\alpha e_F>1$
condition (\ref{2021-06-29_stable})
 reads as $\E X^{1+(v_F-1)\left(1-\frac{1}{\alpha e_F}\right)}<\infty$. In view of  (\ref{2021-07-05}) the latter expectation
 is finite  whenever
\begin{equation}\label{xxx}
 1+(v_F-1)\left(1-\frac{1}{\alpha e_F}\right)<\gamma.
\end{equation}
 We have arrived to the following corollary.

\begin{cor}\label{Corollary}
Let $\nu>0$. Let $n,m\to+\infty$ and  assume that  $m/n\to\nu$.
Let $F$ be a  $2$-connected graph with $v_F\ge 3$ vertices. 
Assume that  $X$ and $Q$ are independent and $\PP\{Q>0\}>0$.

(i) If  $\E X^{2v_F}<\infty$
then
 $({\cal N}_F-\E {\cal N}_F)/(\sigma_{F}\sqrt{m})$ converges in distribution to  the standard normal distribution.

(ii)
Let
 $b>0$ and $1<\gamma<2v_F$.
 Assume that
  (\ref{2021-07-05}) holds.
 Asumme, in addition, that 
 $F$ is balanced and 
 (\ref{xxx}) holds, where
  $\alpha=\gamma/v_F$.
  Then $({\cal N}_F-B_m)/m^{1/\alpha}$ converges in distribution to $G_{\alpha,a}$. Here $B_m$ and $G_{\alpha,a}$ are the same as in Theorem \ref{stable_limit_B}, with 
  $a=b(a_F/v_F!)^{\gamma/v_F}\E Q^{\gamma e_F/v_F}$.
\end{cor}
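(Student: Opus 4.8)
The plan is to deduce both statements directly from Theorems \ref{normal_limit_B} and \ref{stable_limit_B}, by checking that under the independence of $X$ and $Q$ the hypotheses of each theorem collapse to the stated moment/tail conditions. The recurring device is that $Q\in[0,1]$ forces $Q^s\le 1$, so that every mixed moment factorizes and is controlled by a pure moment of $X$, namely $\E(X^\beta Q^s)=\E X^\beta\,\E Q^s\le\E X^\beta$ for all $\beta,s>0$.

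For part (i), I would first dispatch the variance hypothesis of Theorem \ref{normal_limit_B}. The pointwise bound $N_F\le(X)_{v_F}\le X^{v_F}$ gives $\E N_F^2\le\E X^{2v_F}<\infty$, hence $\sigma_F^2=\Var N_F\le\E N_F^2<\infty$ (finiteness being the substantive part; the strict positivity $\sigma_F^2>0$ is the non-degeneracy already built into Theorem \ref{normal_limit_B}). I would then verify (\ref{2021-06-29}): each exponent $1+s(1-\tfrac{1}{2e_F})$ is increasing in $s$ and, at $s=v_F-1$, equals $v_F-\tfrac{v_F-1}{2e_F}<v_F\le 2v_F$; since $X$ is integer-valued, $X^a\le X^b$ pointwise whenever $0<a\le b$, so the factorization above bounds each mixed moment by $\E X^{2v_F}<\infty$. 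Theorem \ref{normal_limit_B} then yields the central limit theorem.

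For part (ii), the one genuinely analytic step is the translation of the tail. Writing $N_F^*=(a_F/v_F!)(X)_{v_F}Q^{e_F}$ and using $(X)_{v_F}\sim X^{v_F}$ as $X\to\infty$, the factor $(a_F/v_F!)(X)_{v_F}$ is regularly varying of index $\alpha=\gamma/v_F$ under (\ref{2021-07-05}), with $\PP\{(a_F/v_F!)(X)_{v_F}>t\}\sim b(a_F/v_F!)^{\gamma/v_F}t^{-\alpha}$. As $Q^{e_F}$ is independent of $X$ and bounded (so $\E Q^{(\alpha+\varepsilon)e_F}\le 1<\infty$), Breiman's lemma gives
\begin{equation*}
\PP\{N_F^*>t\}\sim\E\bigl(Q^{\alpha e_F}\bigr)\,\PP\{(a_F/v_F!)(X)_{v_F}>t\}\sim b\,(a_F/v_F!)^{\gamma/v_F}\,\E Q^{\gamma e_F/v_F}\,t^{-\alpha},
\end{equation*}
which is exactly (\ref{2021-06-23+1}) with $a=b(a_F/v_F!)^{\gamma/v_F}\E Q^{\gamma e_F/v_F}$; here $\PP\{Q>0\}>0$ guarantees $a>0$. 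To finish I would verify (\ref{2021-06-29_stable}): from $\gamma>1$ and $v_F\le e_F$ (which holds for $2$-connected $F$ with $v_F\ge3$) we get $\alpha e_F>1$, so each exponent $1+s(1-\tfrac{1}{\alpha e_F})$ is increasing in $s$ with maximum at $s=v_F-1$ equal to the left-hand side of (\ref{xxx}); since (\ref{2021-07-05}) makes $\E X^\beta<\infty$ precisely for $\beta<\gamma$, condition (\ref{xxx}) supplies every moment in (\ref{2021-06-29_stable}), and Theorem \ref{stable_limit_B} delivers the $\alpha$-stable limit.

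The step I expect to be the main obstacle is this tail translation: passing from a power law for $X$ to a power law for the product $\binom{X}{v_F}Q^{e_F}$ and pinning down the constant $a$. The independence of $X,Q$ together with $Q\le 1$ is exactly what lets Breiman's lemma apply with no extra assumption on $Q$, but one must confirm that $(X)_{v_F}$ inherits regular variation from $X$ and track the multiplicative constants through $(X)_{v_F}\sim X^{v_F}$. Everything else reduces to the factorization $\E(X^\beta Q^s)\le\E X^\beta$ and the equivalence $\E X^\beta<\infty\iff\beta<\gamma$.
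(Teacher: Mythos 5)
Your proposal is correct and follows essentially the same route as the paper: both parts are obtained by reducing the hypotheses of Theorems \ref{normal_limit_B} and \ref{stable_limit_B} to the stated conditions via the bound $N_F\le (X)_{v_F}$, the factorization $\E(X^\beta Q^s)=\E X^\beta\,\E Q^s\le \E X^\beta$, and the monotonicity of the exponents $1+s(1-\tfrac{1}{2e_F})$ and $1+s(1-\tfrac{1}{\alpha e_F})$ in $s$. The only place you go beyond the paper is in spelling out, via Breiman's lemma, the tail equivalence between (\ref{2021-07-05}) and (\ref{2021-06-23+1}) and the value of the constant $a$, which the paper asserts without proof; your derivation of it is sound.
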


It is relevant to mention that the moment condition $\E X<\infty$ together with the assumption $m\approx \nu n$ imply the existence of an asymptotic degree distribution of $G_{[n,m]}$ as $n,m\to+\infty$. An asymptotic power-law degree distribution is obtained if we choose an appropriate distribution for the layer type $(X,Q)$. Furthermore, under an additional moment condition 
$\E X^3Q^2<\infty$ the random graph $G_{[n,m]}$ has a non-vanishing  global clustering coefficient, see \cite{Lasse_Mindaugas2019}. Therefore, Theorems \ref{normal_limit_B} and \ref{stable_limit_B} establish the 
limit distributions of subgraph counts in a highly clustered complex network. 
 
 Finally, we discuss an important question about the relation between the community size  $X$ and strength $Q$. In Theorems \ref{normal_limit_B} and \ref{stable_limit_B}, no assumption has been made about the stochastic dependence between the marginals $X$ and $Q$ of the bivariate random vector $(X,Q)$ defining the random graph  $G_{[n,m]}$. To simplify the model we can assume that $X$ and $Q$ are independent, see Corollary \ref{Corollary} above. However, for  network modelling purposes, various types of 
 dependence between $X$ and $Q$ are of interest. For example, a negative correlation between $X$ and $Q$ would 
emphasize small strong communities and large weak communities, a pattern likely to occur in real networks with overlapping communities. Assuming that $Q$ is proportional to a negative power of $X$, for example, $Q=\min\{1, bX^{-\beta}\}$ for some $\beta\ge 0$ and $b>0$
 (cf.\cite{Yang_Leskovec2012}, \cite{Yang_Leskovec2014}), 
 one obtains a  mathematically tractable network model admitting tunable power-law  degree and bidegree distributions  and rich clustering spectrum
 \cite{Lasse_Mindaugas2019, Joona_Lasse_Mindaugas2021}.

 {\it Related work.} 
 Asymptotic distributions of subgraph counts in Bernoulli random graphs  
 is a well  established area of research, see,
 e.g., \cite{JansonLuczakRucinski2001}, \cite{Rucinski1988} and references therein. For a recent development 
 we refer to \cite{HladkyPelekisSileikis2021},
 \cite{PrivaultSerafin2020}, 
 \cite{Rollin}.
 A signifficant difference between the sparse Bernoulli random graphs and 
 complex networks is that the former ones  have none or very few copies of   a triangle or a larger clique, while the latter ones often have abundant numbers of those.  
 The 
 abundance of dense subgraphs in otherwise sparse
 complex networks is related to the clustering property. 
 The global and local clustering coefficients
 are expressed in terms of counts of triangles and wedges.
  Therefore, a rigorous asymptotic analysis of clustering coefficients in large random networks reduces to that  of  the triangle counts and wedge counts. In particular, the bivariate asymptotic normality for triangle and wedge counts in a related sparse random intersection graph was shown in \cite{JerzyMindaugas_2018}, and related $\alpha$-stable limits were established in \cite {ValentasMindaugas_2016}. Another line of 
  research  pursued in \cite{Grohn_Karjalainen_Leskela, KarjalainenLeeuwaardenLeskela}  addresses the concentration  of subgraph counts in $G_{[n,m]}$.

\medskip

The rest of the paper is organized as follows: In 
Section 2 we formulate and prove 
Theorems \ref{normal_limit_B}, \ref{stable_limit_B} 
and  Remarks \ref{remark-1}, \ref{remark-clique_normal}, \ref{remark-clique_stable}. 
We mention that combinatorial Lemmas 
\ref{obs4}, \ref{E-H} and  
inequality  (\ref{2021-06-28+4}), see below,
may be of independent interest.

\section{Proofs}

\subsection{Notation}
Before the proof we introduce some notation.
We denote for short ${\mathbb X}=(X_1,\dots, X_m)$ and ${\mathbb Q}=(Q_1,\dots, Q_m)$. By $\E^*(\cdot)=\E(\cdot|{\mathbb X}, {\mathbb Q})$ and $\PP^*(\cdot)=\PP(\cdot|{\mathbb X}, {\mathbb Q})$ we denote the conditional expectation and probability given $({\mathbb X}, {\mathbb Q})$. 
Recall that  $a_F$ stands for
 the number of copies of $F$ in 
 the complete graph on $v_F$ vertices 
 ${\cal K}_{v_F}$. For example $a_{{\cal K}_k}=1$ and $a_{{\cal C}_k}=(k-1)!/2$. Given $F$, for any positive sequences $\{a_n\}$ and $\{b_n\}$ we denote $a_n\asymp b_n$ (respectively $a_n\prec b_n$) whenever for sufficiently large $n$ we have $c_1 \le a_n/b_n\le c_2$ (respectively $a_n\le c_2b_n$), where constants $0<c_1<c_2$ may only depend on $v_F$.

Recall that $N_F$ and $N_{F,i}$ denote the numbers of copies of $F$ in $G(X,Q)$ and $G(X_i,Q_i)$, respectively. Furthermore,
$
N_{F}^*=\E(N_F|X,Q)$,
$N_{F,i}^*=\E(N_{F,i}|X_i,Q_i)$, and 
\begin{displaymath}
S_F=N_{F,1}+\cdots+N_{F,m}, 
\qquad
S_F^*=N_{F,1}^*+\cdots+N_{F,m}^*.
\end{displaymath}
 Note that $N_{F,i}^*=\E^*(N_{F,i})$ and $S_F^*=\E^*(S_F)$. Finally, let  ${\tilde N}_i$ be the number of copies of $F$  in $G_{n,i}$ and let ${\tilde S}={\tilde N}_1+\dots+{\tilde N}_m$.

We can identify the indices $1\le i\le m$ with colors, and assign 
(the edges of) each $G_{n,i}$ the color $i$. 
The colored graph is 
denoted by $G^{\star}_{n,i}$. The union of colored graphs 
$G^{\star}_{n,1}\cup\cdots\cup G^{\star}_{n,m}$ defines a 
multigraph, denoted by $G_{[n,m]}^{\star}$, that admits parallel 
edges of different colors. 
Furthermore each edge $u\sim v$ of $G_{[n,m]}$ is 
assigned the set of colors that correspond to parallel edges of 
$G^{\star}_{[n,i]}$ connecting $u$ and $v$.

A subgraph $H\subset G_{[n,m]}$ is called monochromatic if it is a 
subgraph of some $G_{n,i}$ and none of edges of $H$ are assigned 
more than one color. Otherwise $H$ is called polychromatic. 
${\cal N}_{F,M}$ and ${\cal N}_{F,P}$ stand for the 
numbers of monochromatic and polychromatic copies of $F$ in 
$G_{[n,m]}$. A subgraph $H^{\star}\subset G_{[n,m]}^{\star}$ is 
called monochromatic if it is a subgraph of some 
$G_{n,i}^{\star}$. It is 
called polychromatic if it contains edges of different colors. Given 
$H^{\star}\subset G_{[n,m]}^{\star}$, 
let $H_0\subset G_{[n,m]}$ be the 
graph obtained from $H^{\star}$ by merging parallel edges. We call 
$H_0$ the projection of 
$H^{\star}$.
Let ${\cal N}_{F,P}^{\star}$ be the number of 
polychromatic copies of $F$  in $G_{[n,m]}^{\star}$.
Note that there can be 
several monochromatic and/or polychromatic copies of $F$ in
 $G_{[n,m]}^{\star}$  sharing the same projection $F_0$.
We fix a copy $F_0\subset G_{[n,m]}$ of $F$ and denote 
by $h_F$  the expected number of polychromatic subgraphs
 of $G^{\star}_{[n,m]}$ whose projection is $F_0$. Clearly, the number $h_F$ does not depend on the location of  $F_0$.
  
\medskip

\subsection {Proofs} 
We start with an outline of the proof.
We approximate ${\cal N}_F\approx {\tilde S}_F$ and ${\tilde S}_F\approx S_F$. In the case where $\E N_{F}^2<\infty$ we deduce the normal approximation to the sum $S_F$ (of iid random variables) by the standard central limit theorem. In the case where $N_F$ has an 
infinite variance we further approximate $S_F\approx S_F^*$ and deduce the $\alpha$-stable approximation by the generalized central limit theorem 
(see Theorem 2 in $\S$ $35$   
 of \cite{GnedenkoKolmogorov}).

 {\it Approximation  ${\cal N}_F\approx {\tilde S}_F$.} The approximation  follows from  the simple observation that 
 \begin{equation}
\label{2021-05-18}
{\cal N}_F= {\cal N}_{F,M}+{\cal N}_{F,P}, \qquad 
{\cal N}_{F,M}\le {\tilde S}_F\le {\cal N}_{F,M} + {\cal N}^{\star}_{F,P}, \qquad 
{\cal N}_{F,P}\le {\cal N}_{F,P}^{\star}.
\end{equation}
 We only comment on  the second inequality. To see why it holds true\JKcorr{,} let us inspect every copy of  $F$ in $G_{[n,m]}$ that belongs to two or more layers $G_{n,i}$. Let $F_0\subset G_{[n,m]}$ be such a copy. Clearly, the number of polychromatic subgraphs $F^{\star}$ in $G^{\star}_{[n,m]}$, whose projection $F^{\star}_0$ is $F_0$, is larger than the number of monochromatic ones. Hence ${\tilde S}_F\le {\cal N}_{F,M}+{\cal N}^{\star}_{F,P}$. From (\ref{2021-05-18}) we conclude that 
\begin{equation}\label{2021-04-09+13}
|{\tilde S}_F-{\cal N}_F|\le {\cal N}_{F,P}^{\star}.
\end{equation}

\smallskip

In order to assess the accuracy of the approximation 
${\cal N}_F\approx {\tilde S}_F$, we evaluate the expected value of ${\cal N}_{F,P}^{\star}$. Let ${\cal K}_{[n]}$ be a clique on the vertex set $V=[n]$. We couple $G_{[n,m]}\subset {\cal K}_{[n]}$ and fix a subgraph $F_0=({\cal V}_{0},{\cal E}_{0})\subset {\cal K}_{[n]}$ with vertex set $\{1,\dots, v_F\}\subset V$, which is a copy of $F$.
We have, by symmetry,
\begin{equation}\label{2021-03-18+2}
\E {\cal N}_{F,P}^{\star}=\binom{n}{v_F}a_Fh_F.
\end{equation}
Each $F^{\star}\subset G^{\star}_{[n,m]}$ whose projection  is $F_0$
 is defined by the partition of the edge set ${\cal E}_{0}$ into non-empty color classes, say\JKcorr{,} $B_1\cup\cdots\cup B_r={\cal E}_0$, and the vector of distinct colors $(i_1,\dots, i_r)\subset[m]^r$ such that all the edges in $B_j$ are of the color $i_j$ (edges of $B_j$ belong to $G^{\star}_{n,i_j}$). Denote by ${\tilde B}=(B_1,\dots, B_r)$ and ${\tilde i}=(i_1,\dots, i_r)$ the partition and its coloring. The polychromatic subgraph $F^{\star}$  defined by the pair $({\tilde B}, {\tilde i})$ is denoted  $F({\tilde B}, {\tilde i})$. The probability that such a subgraph is present in $G_{[n,m]}^{\star}$ is 
\begin{equation}
\label{2021-06-28+5}
h({\tilde B}, {\tilde i}) := \PP\bigl\{ F({\tilde B}, {\tilde i})\in G_{[n,m]}^* \bigr\} = \prod_{j=1}^r \frac{1}{(n)_{v_j}} \E \left( ({\tilde X}_{i_j})_{v_j}Q_{i_j}^{b_j} \right).
\end{equation}
Here $b_j:=|B_j|$, and $v_j$ is the number of distinct vertices incident to edges from $B_j$. We have
\begin{equation}\label{2021-03-01}
h_F = \E \left( \sum_{({\tilde B}, {\tilde i})} {\mathbb I}_{ \bigl\{ F({\tilde B}, {\tilde i})\in G_{[n,m]}^* \bigr\} } \right) = \sum_{({\tilde B}, {\tilde i})} h({\tilde B}, {\tilde i}).
\end{equation}
Here the sum runs over all  possible polychromatic $F^{\star}$ whose projection $F^{\star}_0$ is $F_0$. We
upper
 bound $h_F$ in Lemmas \ref{two_connected} and \ref{h_complete} below.

\medskip

\noindent

{\it Approximation ${\tilde S}_F\approx S_F$.} For $1\le i\le m$ we couple $G({\tilde X}_i,Q_i)\subset G(X_i,Q_i)$ and ${\tilde N}_i\le N_i$ so that $G({\tilde X}_i,Q_i)\not= G(X_i,Q_i)$ and ${\tilde N}_i\not= N_i$ whenever $X_i>n$. For $m=O(n)$\JKcorr{,} the event ${\cal A}_n:=\{\max_{1\le i\le m}X_i>n\}$ has probability 
\begin{equation}
\label{2021-06-28+1}
\PP\{{\cal A}_n\} \le \sum_{i=1}^m\PP\{X_i>n\} 
\le 
\frac{m}{n}
\E \left(X_1{\mathbb I}_{\{X_1>n\}}\right) = o(1)\JKcorr{,}
\end{equation}
hence $\PP\{{\tilde S}_F\not=S_F\}=o(1)$.
In 
(\ref{2021-06-28+1}) 
we used the fact that $\E X_1<\infty
\Rightarrow
\E \bigl(X_1{\mathbb I}_{\{X_1>n\}}\bigr)=o(1)$.
\smallskip

\begin{proof}[Proof of Theorem  \ref{normal_limit_B} 
and Remark
\ref{remark-clique_normal}]
By Lemma \ref{two_connected} (respectively, Lemma \ref{h_complete}),
we have 
 $h_F=o(n^{0.5-v_F})$.
Invoking this bound in  (\ref{2021-03-18+2}) we obtain 
 ${\cal N}_{F,P}^{\star}=o_P(\sqrt{m})$. 
 Next, from   (\ref{2021-04-09+13}) we obtain that
 $({\cal N}_F-S_F)=o_P(\sqrt{m})$.
  Then an application of  (\ref{2021-06-28+1})
shows $({\cal N}_F-S_F)=o_P(\sqrt{m})$. Finally, we apply the classical central limit theorem to the sum of iid random variables $S_F$ to get the asymptotic normality of $({\cal N}_F-\E {\cal N}_F)/ (\sigma_{F}\sqrt{m})$.
\end{proof}

\begin{proof}[Proof of Theorem  \ref{stable_limit_B}
and Remark
\ref{remark-clique_stable}]
By Lemma \ref{two_connected} (respectively, Lemma \ref{h_complete}),
we have 
 $h_F=o(n^{\frac{1}{\alpha}-v_F})$.
Proceeding as in the proof of Theorem  \ref{normal_limit_B} above\JKcorr{,} we obtain $({\cal N}_F-S_F)=o_P(m^{1/\alpha})$. Furthermore, by Lemma \ref{lemma_stable}, the iid random variables $N_{F,1}$, $N_{F,2}$, $\dots$ obey  the power law (\ref{2021-06-11+1}) and therefore $(S_F-B_m)/m^{1/\alpha}$ converges in distribution to $G_{\alpha,a}$ (see Theorem 2 in $\S$ $35$ of \cite{GnedenkoKolmogorov}). Hence $m^{-1/\alpha}({\cal N}_F-B_m)$ converges in distribution to  $G_{\alpha,a}$.
\end{proof}

\begin{proof}[Proof of Remark \ref{remark-1}]
 We have $\sigma_F^2=\Var N_F=\Var N_F^*+\E (\Delta_F^*)^2$, where $\Delta_F^*:=N_F-N_F^*$. Therefore $\sigma_F^2<\infty\Rightarrow \Var N_F^*<\infty \Rightarrow \E (N_F^*)^2<\infty$. To prove that $\E (N_F^*)^2<\infty\Rightarrow \sigma_F^2<\infty$, it suffices to show that $\E (\Delta_F^*)^2<\infty$. By Lemma 3.5 of \cite{JansonLuczakRucinski2001}, we have $\E^*(\Delta_F^*)^2\prec (N_F^*)^2/\Phi_F(X,Q)$, where $\Phi_F(X,Q)=\min_{H\subset F}X^{v_H}Q^{e_H}$.
 Furthermore, from the inequality (\ref{2021-06-21}), 
see below,
 which holds for balanced $F$, we obtain
\begin{displaymath}
 \E^*(\Delta_F^*)^2 \prec\frac{ (N_F^*)^2}{\min\{(N_F^*)^{2/v_F},N_F^*\}} = 
 \max \{ (N_F^*)^{2-2/v_F}, N_F^*\} \le \max \{1, (N_F^*)^2\}.
\end{displaymath}
Hence $\E (N_F^*)^2<\infty$ implies $\E (\Delta_F^*)^2=\E (\E^*(\Delta_F^*)^2)<\infty$.
\end{proof}

\subsection{Auxiliary lemmas}

In Lemmas \ref{two_connected} and \ref{h_complete} we 
 upper bound the moments $h_F$ for 
 $2$-connected $F$ and for $F={\cal K}_k$, respectively. Clearly, the 
result of Lemma \ref{two_connected} applies to  $F={\cal K}_k$ as 
well, but the bound of Lemma  \ref{h_complete} is tighter for large 
$k$.

\begin{lem}\label{two_connected}
Let $F$ be a $2$-connected graph with $v_F\ge 3$ vertices. Let $n,m\to+\infty$. Assume that $m=O(n)$.
 
(i)  Assume that (\ref{2021-06-29}) holds. Then $h_F=o(n^{0.5-v_F})$.

(ii) Assume that $0<\alpha<2$, and that  
(\ref{2021-06-29_stable}) holds. Then $h_F=o(n^{\frac{1}{\alpha}-v_F})$.
\end{lem}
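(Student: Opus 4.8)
The plan is to bound $h_F$ term by term over the (finite, depending only on $F$) family of edge-partitions indexing the polychromatic copies, converting each factor of (\ref{2021-06-28+5}) into a power of $n$ by means of the moment conditions. A polychromatic $F^{\star}$ with projection $F_0$ is an edge-partition ${\cal E}_0=B_1\cup\cdots\cup B_r$ into $r\ge 2$ nonempty blocks together with an injective coloring; there are at most $(m)_r\le m^r$ colorings of a fixed partition. Since $m=O(n)$ and $(n)_{v_j}\asymp n^{v_j}$, (\ref{2021-03-01}) and (\ref{2021-06-28+5}) show that each such term equals
\[
m^r \prod_{j=1}^r \frac{1}{(n)_{v_j}} \E\!\left(({\tilde X})_{v_j} Q^{b_j}\right) \asymp n^{\,r-V} \prod_{j=1}^r \E\!\left(({\tilde X})_{v_j} Q^{b_j}\right), \qquad V:=\sum_{j=1}^r v_j,
\]
so it suffices to show each term is $o(n^{c-v_F})$ with $c=\tfrac12$ in (i) and $c=\tfrac1\alpha$ in (ii).

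Next I would bound each block moment. Using $({\tilde X})_{v_j}\le (X\wedge n)^{v_j}$ and setting $a_j:=1+b_j(1-\tfrac{1}{2e_F})$ in (i) (resp. $a_j:=1+b_j(1-\tfrac{1}{\alpha e_F})$ in (ii)), condition (\ref{2021-06-29}) (resp. (\ref{2021-06-29_stable})) gives $\E(X^{a_j}Q^{b_j})<\infty$ whenever $b_j\le v_F-1$; blocks with $b_j\ge v_F$ are edge-rich (so $v_j\le b_j$) and are handled by the same scheme after capping the $Q$-power at $v_F-1$ via $Q\le 1$, which only improves the estimate. If $v_j\le a_j$ the moment is a finite constant; otherwise I truncate, $\E((X\wedge n)^{v_j}Q^{b_j})\le n^{v_j-a_j}\E(X^{a_j}Q^{b_j})$, and upgrade this to $o(n^{v_j-a_j})$ by the elementary fact that finiteness of $\E(X^{a}Q^{b})$ forces $\E((X\wedge n)^{v}Q^{b})=o(n^{v-a})$ for $v>a$ (split at $\{X\le K\}$, let $n\to\infty$ and then $K\to\infty$). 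Writing $\delta_j=(v_j-a_j)_+$ we obtain $\E(({\tilde X})_{v_j}Q^{b_j})\prec n^{\delta_j}$, with a genuine little-$o$ whenever $\delta_j>0$.

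Collecting exponents, a short computation gives $r-V+\sum_j\delta_j=-\Sigma$ with $\Sigma=\sum_j\min\!\big(v_j-1,\,b_j(1-\tfrac{1}{2e_F})\big)$ (resp. with $\alpha e_F$). The elementary inequality $\min(s,b-\theta)\ge\min(s,b)-\theta$, together with $\sum_j\min(b_j,v_F-1)\le e_F$, reduces matters to the purely combinatorial claim
\[
\sum_{j=1}^r \min(v_j-1,\,b_j)\ \ge\ v_F ,
\]
after which $\Sigma\ge v_F-\tfrac12$ (resp. $v_F-\tfrac1\alpha$), i.e. $-\Sigma\le c-v_F$, as required; summing over the finitely many partitions then yields the lemma.

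The main obstacle is the displayed combinatorial claim, which is exactly where $2$-connectivity is used and which I expect to be the content of Lemmas \ref{obs4}--\ref{E-H}. Since $\min(v_j-1,b_j)\ge v_j-c_j$ (spanning-forest edges of the $j$-th block, $c_j$ its number of components) and the union of these forests is a connected spanning subgraph of $F$, one always has $\sum_j\min(v_j-1,b_j)\ge\sum_j(v_j-c_j)\ge v_F-1$; the extra unit comes from $2$-connectivity, as equality $v_F-1$ would require a tree-like block overlap in which two parts meet in a single vertex, and that vertex would be a cut vertex. The remaining point is the boundary case $-\Sigma=c-v_F$, attained e.g. for $F={\cal C}_k$ split into paths: if every block were light ($v_j-1\le b_j(1-\tfrac{1}{2e_F})$) then $\Sigma=\sum_j(v_j-1)\ge v_F$ would strictly beat the target, so equality forces at least one heavy block ($\delta_j>0$), whose truncation supplies the strict little-$o$ and upgrades the boundary estimate $O(n^{c-v_F})$ to $o(n^{c-v_F})$.
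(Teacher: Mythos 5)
Your analytic reduction is sound and runs parallel to the paper's: both arguments bound $h(\tilde{B},\tilde{i})$ partition by partition, absorb the $(m)_r\prec n^r$ colorings, convert each block moment into a power of $n$ via the truncation fact (\ref{fact_1}), and then win by a combinatorial inequality that charges $2$-connectivity for one extra unit in the exponent. Your bookkeeping is also correct: indexing the moment condition by $s=b_j$ (capped at $v_F-1$) rather than by $s=v_j-\rho_j$ as the paper does is a legitimate variant, since the total correction $\sum_j\min(b_j,v_F-1)/(2e_F)\le 1/2$ still comes out right; and your treatment of the boundary case (if every block is light then $\Sigma=\sum_j(v_j-1)\ge v_F$ beats the target outright, otherwise some heavy block supplies the genuine little-$o$ through (\ref{fact_1})) is fine.

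The gap is the combinatorial claim $\sum_j\min(v_j-1,b_j)\ge v_F$, which you correctly identify as the crux but do not prove. Your easy half gives only $\sum_j(v_j-c_j)\ge v_F-1$, which would yield $\Sigma\ge v_F-\tfrac32$ and lose the lemma; the entire content of $2$-connectivity is the missing $+1$, and the sentence ``equality would require a tree-like block overlap \dots and that vertex would be a cut vertex'' is an assertion, not an argument. It is not explained why equality forces the component--vertex incidence structure to be a tree, nor why such a tree with $t\ge 2$ connected pieces must contain a vertex lying in two pieces whose removal disconnects $F_0$ --- note that the pieces here are the connected components of the blocks $Z_j=(V_j,B_j)$, of which a single block may have several, so the overlap pattern is more delicate than ``two parts meeting in a point.'' This is exactly what the paper establishes as inequality (\ref{2021-06-28+4}), $\sum_j v_j\ge v_F+\sum_j\rho_j$ (which implies your claim because $\min(v_j-1,b_j)\ge v_j-\rho_j$), by adding the components one at a time, tracking the component count of the partial union, and using $2$-connectivity to force the last piece to meet every component of the partial union in at least two vertices. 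Until you write out that argument (your incidence-tree heuristic can be made rigorous, but it is currently a one-line sketch), the proof is incomplete at its essential step.
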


In the proof we use the simple fact that for any $s,t,\tau>0$, the moment condition $\E (X^sQ^t)<\infty$ implies
\begin{equation}\label{fact_1}
\E \left((\min\{X,n\})^{s+\tau}Q^t\right) = o\left(n^{\tau}\right).
\end{equation}
Denote ${\tilde X}:=\min\{X,n\}$. To see why (\ref{fact_1}) 
holds, choose $0<\delta<\tau/(s+\tau)$ and split the expectation
\begin{displaymath}
\E \left({\tilde X}^{s+\tau}Q^t\right) = 
\E \left( {\tilde X}^{s+\tau}Q^t {\mathbb I}_{\{X<n^{\delta}\}} \right) +
\E \left( {\tilde X}^{s+\tau}Q^t{\mathbb I}_{\{X\ge n^{\delta}\}} \right) =: I_1+I_2.
\end{displaymath}
Inequalities ${\tilde X}\le n$ and $\E (X^sQ^t)<\infty$ imply 
$I_2
\le 
n^{\tau} \E \left(X^sQ^t{\mathbb I}_{\{X\ge n^{\delta}\}}\right) 
=
n^{\tau}\cdot o(1)$.
Inequality ${\tilde X}\le X$ implies $I_1\le n^{\delta(s+\tau)}=o(n^\tau)$.

\begin{proof}[Proof of Lemma \ref{two_connected}]
The proofs of statements (i) and (ii) are identical. Therefore we only 
prove statement (i).  

We start with establishing  an auxiliary inequality 
(\ref{2021-06-28+4}) below, which may be interesting in itself. 
Let $r\ge 2$. Given a partition  ${\tilde B}=(B_1,\dots, B_r)$ of the 
edge set ${\cal E}_0$ of the graph $F_0=({\cal V}_0,{\cal E}_0)$, 
and given
$i\in [r]$, let $V_i$ be the set of vertices  incident to the edges from 
$B_i$. Let $\rho_i$ be the number of (connected) components of 
the graph $Z_i=(V_i,B_i)$ and put $v_i=|V_i|$. We claim that 
\begin{equation}
\label{2021-06-28+4}
v_1+\dots+v_r\ge v_F+\rho_1+\dots+\rho_r.
\end{equation}
To establish the claim we consider the list $H_1,H_2,\dots, H_t$ of components of $Z_1$, $\dots$, $Z_r$ arranged in an arbitrary order. Here $t:=\rho_1+\dots+\rho_r$. Therefore, each graph $H_i$ is a component of some $Z_j$ and their union $H_1\cup\cdots\cup H_t=Z_1\cup\cdots\cup Z_r=F_0$. Let us consider the sequence of graphs ${\bar H}_j:= H_1\cup\cdots\cup H_{j}$, for $j=1,\dots, t-1$. Let ${\bar \rho}_j$ and ${\bar v}_j$ denote the number of components  and the number of vertices of  ${\bar H}_j$. Let $v'_j$ denote the number of vertices of $H_j$. We use the observation that 
\begin{equation}\label{2021-06-28+6}
{\bar v}_j \le {\bar v}_{j-1}+v'_j+{\bar \rho}_{j}-{\bar \rho}_{j-1}-1 \qquad
{\text{for}} \qquad j=2,\dots t-1.
\end{equation}
Indeed, ${\bar \rho}_{j-1}={\bar \rho}_j$ means that the vertex set 
of (the connected graph) $H_j$ intersects with exactly one 
component of ${\bar H}_{j-1}$. Consequently, 
$H_j$ and ${\bar H}_{j-1}$ have at least one common vertex and 
therefore  (\ref{2021-06-28+6}) holds. Similarly, 
${\bar \rho}_{j-1}-{\bar \rho}_j=y>0$ 
means that the vertex set of $H_j$ intersects with exactly $y+1$ 
different components of ${\bar H}_{j-1}$. Consequently, $H_j$ and 
${\bar H}_{j-1}$ have at least $y+1$ common vertices and  
(\ref{2021-06-28+6}) holds again. The remaining case 
${\bar \rho}_{j-1}-{\bar \rho}_j=-1$ is realized by the configuration 
where the vertex sets of $H_j$ and ${\bar H}_{j-1}$ have no 
common elements. In this case (\ref{2021-06-28+6}) follows from 
the identity ${\bar v}_j = {\bar v}_{j-1}+v'_j$.

By summing up the inequalities (\ref{2021-06-28+6}), we obtain (using ${\bar \rho}_1=1$) that
\begin{displaymath}
{\bar v}_{t-1} \le  v'_1+\cdots+v'_{t-1}+ {\bar \rho}_{t-1}-t+1.
\end{displaymath}
Note that given ${\bar H}_{t-1}$ with ${\bar \rho}_{t-1}$ components, the vertex set of  $H_{t}$  must intersect with each component in two or more points, in order to make the union 
${\bar H}_{t-1}\cup H_t=F_0$ $2$-connected. Consequently, we have
\begin{displaymath}
{\bar v}_t \le {\bar v}_{t-1}+v'_t-2{\bar \rho}_{t-1}.
\end{displaymath}
Finally, we obtain
\begin{displaymath}
v_F = {\bar v}_t \le v_1'+\cdots+v'_t-{\bar \rho}_{t-1}-t+1.
\end{displaymath}
Now the claim follows from the identity $v'_1+\dots+v'_t=v_1+\cdots+v_r$ and the inequality ${\bar \rho}_{t-1}\ge 1$.

\medskip

\noindent
Let us prove statement (i). Given $({\tilde B}, {\tilde i})$, we obtain from  (\ref{2021-06-28+5})   and (\ref{2021-06-28+4}) 
(recall  the notation $b_j=|B_j|$) that 
\begin{align*}
 h({\tilde B}, {\tilde i}) \le \frac{1}{n^{v_1+\dots+v_r}}\prod_{j=1}^r \E \left(
{\tilde X}^{v_j}Q^{b_j} \right)
\le
\frac{1}{n^{v_F+\rho_1+\dots+\rho_r}} \prod_{j=1}^r \E\left( {\tilde X}^{v_j}Q^{b_j}\right).
\end{align*}
Given ${\tilde B}=(B_1,\dots, B_r)$, we estimate the sum over all possible colorings (there are $(m)_r$ of them) 
\begin{align*}
\sum_{{\tilde i}}h({\tilde B},{\tilde i}) & \prec \frac{(m)_r}{n^{v_F+\rho_1+\dots+\rho_r}} \prod_{j=1}^r \E\left( {\tilde X}^{v_j}Q^{b_j} \right)
\asymp
n^{-v_F} \prod_{j=1}^r \frac{\E\left( {\tilde X}^{v_j}Q^{b_j}\right)}{n^{\rho_j-1}} \\
&= n^{0.5-v_F} \prod_{j=1}^r \frac{\E\left( {\tilde X}^{v_j}Q^{b_j}\right)}{n^{\rho_j-1+(b_j/(2e_F))}} = o\bigl(n^{0.5-v_F}\bigr).
\end{align*}
In the second last identity we used $b_1+\dots+b_r=e_F$, while the last bound follows by the chain of inequalities 
\begin{align*}
n^{1-\rho_j}\E\left({\tilde X}^{v_j}Q^{b_j}\right) &\le
\E\left({\tilde X}^{v_j+1-\rho_j}Q^{b_j}\right) \le
\E\left({\tilde X}^{v_j+1-\rho_j}Q^{v_j-\rho_j}\right) \\
&= o\left(n^{(v_j-\rho_j)/(2e_F)} \right) = o\left( n^{b_j/(2e_F)} \right).
\end{align*}
Here in the first step we used ${\tilde X}\le n$; in the second step we used $Q\le 1$ and 
$b_j\ge v_j-\rho_j$ (the latter inequality is based on the observation that any graph with $v_j$ vertices and $\rho_j$ components  has at least $v_j-\rho_j$ edges); the third step follows by (\ref{fact_1}) from the moment condition (\ref{2021-06-29}) applied to $s=v_j-\rho_j$; the last step follows from the inequality $b_j\ge v_j-\rho_j$.

Finally, we conclude that 
\begin{equation}\label{2021-06-29+10}
h_F = \sum_{{\tilde B}} \sum_{{\tilde i}} h({\tilde B},{\tilde i}) = o\bigl(n^{0.5-v_F}\bigr),
\end{equation}
because the number of  partitions ${\tilde B}$ of the edge set of a given graph $F$ is always finite. 
\end{proof}

Before showing an upper bound for $h_F$, $F={\cal K}_k$, we introduce some notation. Given an integer $b\ge 1$, let $b^{\star}$ be the minimal number of vertices that a graph with $b$ edges may have. Let $H_b$ be such a graph. It has a simple structure described below.
Let $k_b\ge 2$ be the largest integer satisfying $b\ge \binom{k_b}{2}$. Then 
\begin{displaymath}
b={\binom{k_b}{2}} +\Delta_b
\end{displaymath}
for some integer $0\le \Delta_b\le k_b-1$. For  $\Delta_b=0$ we have $b^{\star}=k_b$ and $H_b={\cal K}_{b^{\star}}$ (clique on $b^{\star}=k_b$ vertices). For $\Delta_b>0$, graph $H_b$ is a union of  ${\cal K}_{k_b}$ and a star ${\cal K}_{1, \Delta_b}$, such that all the vertices of the star except for the central vertex belong to the vertex set of ${\cal K}_{k_b}$. In this case $b^{\star}=k_b+1$. In other words, one obtains $H_b$  from ${\cal K}_{k_b+1}$ by deleting $k_b-\Delta_b$ edges sharing a common endpoint. The next two lemmas establish useful properties of the function $b\to b^{\star}$.

\begin{lem}\label{obs4} For integers $s\ge t\ge 1$ we have 
\begin{equation}\label{2021-03-16+10}
s^{\star}+t^{\star}\ge (s+t-1)^{\star}+2. 
\end{equation}
\end{lem}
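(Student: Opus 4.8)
The plan is to exploit the clean characterization of the map $b\mapsto b^{\star}$ implicit in the construction of $H_b$: namely, $b^{\star}$ is the least integer $v$ for which $\binom{v}{2}\ge b$. Indeed, a graph on $v$ vertices has at most $\binom{v}{2}$ edges, so any graph with $b$ edges requires $\binom{v}{2}\ge b$, while the explicit graph $H_b$ attains this bound. Writing $a:=s^{\star}$ and $c:=t^{\star}$, minimality simultaneously supplies the upper bounds $s\le\binom{a}{2}$, $t\le\binom{c}{2}$ and the facts $a,c\ge 2$ (a single edge already forces two vertices, so $1^{\star}=2$). In this notation the target inequality $s^{\star}+t^{\star}\ge (s+t-1)^{\star}+2$ is equivalent to $(s+t-1)^{\star}\le a+c-2$.

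First I would reduce this to a single binomial inequality. Since $(s+t-1)^{\star}$ is by definition the least $v$ with $\binom{v}{2}\ge s+t-1$, it suffices to check that $a+c-2$ already belongs to this set, i.e.
\[
\binom{a+c-2}{2}\ge s+t-1.
\]
Using $s\le\binom{a}{2}$ and $t\le\binom{c}{2}$, the right-hand side is at most $\binom{a}{2}+\binom{c}{2}-1$, so it is enough to establish the purely algebraic statement
\[
\binom{a+c-2}{2}\ge \binom{a}{2}+\binom{c}{2}-1.
\]

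The final step is a direct computation of the slack. Expanding both sides yields the exact identity
\[
\binom{a+c-2}{2}-\binom{a}{2}-\binom{c}{2}+1=(a-2)(c-2),
\]
which is nonnegative precisely because $a\ge 2$ and $c\ge 2$; moreover this shows equality holds exactly when $a=2$ or $c=2$, that is, when $s=1$ or $t=1$. I do not expect a genuine obstacle here beyond organizing the reduction correctly. The two points requiring care are invoking minimality in the right direction (to pass from the binomial bound back to $(s+t-1)^{\star}\le a+c-2$) and recording that $a,c\ge 2$ so that the slack $(a-2)(c-2)$ carries the correct sign; the algebra itself is routine.
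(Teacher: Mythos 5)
Your proof is correct, and it takes a genuinely different route from the paper's. The paper proves the inequality by traversing the chain of pairs $(s,t)\to(s+1,t-1)\to\cdots\to(s+t-1,1)$, classifying each edge-moving step as positive, negative, or neutral according to how it changes $x^{\star}+y^{\star}$, and then arguing combinatorially that positive steps are at least as numerous as negative ones; that argument is tied to the explicit structure of the extremal graphs $H_b$ and requires some care about the order in which non-neutral steps occur. You instead use the clean numerical characterization $b^{\star}=\min\{v:\binom{v}{2}\ge b\}$ (which is consistent with the paper's description of $H_b$: with $b=\binom{k_b}{2}+\Delta_b$ one has $b^{\star}=k_b$ or $k_b+1$ according as $\Delta_b=0$ or not), reduce the claim to $\binom{a+c-2}{2}\ge\binom{a}{2}+\binom{c}{2}-1$ via the bounds $s\le\binom{a}{2}$, $t\le\binom{c}{2}$, and finish with the identity $\binom{a+c-2}{2}-\binom{a}{2}-\binom{c}{2}+1=(a-2)(c-2)\ge 0$ for $a,c\ge 2$; I verified the algebra and the reduction, including the boundary case $s=t=1$. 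Your route is shorter, avoids the case analysis of the stepping argument entirely, and as a bonus pins down the equality cases ($s=1$ or $t=1$); the paper's route stays closer to the graph-theoretic picture of merging the extremal graphs $H_s$ and $H_t$ but buys nothing extra for this particular lemma.
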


\begin{proof} 
In the proof we consider graphs $H_s$ and $H_t$ that have disjoint vertex sets so that the union $H_s\cup H_t$ has $s^{\star}+t^{\star}$ vertices.

Note that for $t=1$ both sides of (\ref{2021-03-16+10}) are equal. In order to show (\ref{2021-03-16+10}) for $s\ge t\ge 2$ we consider the chain of neighboring pairs
\begin{equation}\label{2021-03-16+11}
(s,t)\to (s+1, t-1)\to\cdots\to (s+t-1,1).
\end{equation} 
In a step $(x,y)\to (x+1,y-1)$  we remove an edge from $H_y$ and add it to $H_x$. A simple 
analysis of the step  $(H_x,H_y)\to (H_{x+1}, H_{y-1})$ shows that
\begin{eqnarray}\label{2021-03-16+1}
&&
 (x+1)^{\star}+(y-1)^{\star}= x^{\star}+y^{\star}+1 \ \quad
 {\text{whenever}} \ \quad
 \Delta_x=0, \ \Delta_y\not=1, \\
\label{2021-03-16+2}
&&
 (x+1)^{\star}+(y-1)^{\star}= x^{\star}+y^{\star}-1 \ \quad
  {\text{whenever}} \ \quad
\Delta_x\not= 0, \  \Delta_y=1,\\
\label{2021-03-16+3}
&&
 (x+1)^{\star}+(y-1)^{\star}= x^{\star}+y^{\star}
\quad \quad \quad {\text{in the remaining cases}}.
 \end{eqnarray} 
We call a step $(x,y)\to(x+1,y-1)$ positive (respectively negative or neutral) if (\ref{2021-03-16+2}) (respectively (\ref{2021-03-16+1}) or (\ref{2021-03-16+3})) holds. Therefore, as we move in (\ref{2021-03-16+11}) from left to right, every positive (negative) step decreases (increases) the total number of vertices in the union $H_x\cup H_y$.

Let us now traverse (\ref{2021-03-16+11}) from right to left. We 
observe that the first non-neutral step encountered 
is positive (if we encounter a non-neutral step at all). Furthermore, 
after a negative step the first non-neutral step encountered is 
positive. Note that it may happen that the last encountered non-
neutral step is negative. Therefore, the total number of positive 
steps is at least as large as the number of negative ones. This 
proves (\ref{2021-03-16+10}).
\end{proof}

\begin{lem}\label{E-H}  Let $k\ge 3$ and $r\ge 2$. Let $B_1\cup\cdots\cup B_r$ be a partition of the edge set of the clique ${\cal K}_k$. Denote $b_i=|B_i|$, $1\le i\le r$, and $\varkappa=\binom{k}{2}$. We have
\begin{equation}\label{2021-03-18}
b_1^{\star}+\cdots+b_r^{\star} \ge (\varkappa-(r-1))^{\star}+2(r-1) \ge k+r.
\end{equation}
\end{lem}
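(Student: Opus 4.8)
The plan is to prove the two inequalities in (\ref{2021-03-18}) separately. First I would establish the left inequality
$$b_1^{\star}+\cdots+b_r^{\star} \ge (\varkappa-(r-1))^{\star}+2(r-1)$$
by iterating Lemma \ref{obs4}. The idea is to repeatedly merge parts: relabel the $b_i$ so that $b_1\ge b_2\ge\cdots\ge b_r$, and apply Lemma \ref{obs4} to the two smallest (or any suitable pair) to fuse them into a single value $b_i+b_j-1$, paying a cost of $2$ each time. Explicitly, applying (\ref{2021-03-16+10}) with $s=b_{r-1}$, $t=b_r$ gives $b_{r-1}^{\star}+b_r^{\star}\ge (b_{r-1}+b_r-1)^{\star}+2$. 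Repeating this $r-1$ times collapses all $r$ parts into one part whose $b$-value is $b_1+\cdots+b_r-(r-1)=\varkappa-(r-1)$, while accumulating an additive bonus of $2(r-1)$. Care is needed to check that Lemma \ref{obs4} applies at each step (it requires $s\ge t\ge 1$, which holds as long as we always merge so that the surviving value dominates), but since $b$-values only grow under merging, the hypothesis $s\ge t\ge 1$ is maintained throughout.

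Next I would prove the right inequality $(\varkappa-(r-1))^{\star}+2(r-1)\ge k+r$, which is a statement purely about the single-variable function $b\mapsto b^{\star}$. Writing $b=\varkappa-(r-1)=\binom{k}{2}-(r-1)$, I would use the explicit description of $b^{\star}$ given just before Lemma \ref{obs4}: namely $b^{\star}=k_b$ when $\Delta_b=0$ and $b^{\star}=k_b+1$ when $\Delta_b>0$, where $b=\binom{k_b}{2}+\Delta_b$ with $0\le \Delta_b\le k_b-1$. Since $b=\binom{k}{2}-(r-1)$ lies just below $\binom{k}{2}$, one has $k_b=k-1$ for $r-1$ in the relevant range (specifically when $\binom{k-1}{2}\le \binom{k}{2}-(r-1)<\binom{k}{2}$, i.e.\ $1\le r-1\le k-1$), giving $b^{\star}\ge k-1$. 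Substituting, the claim $b^{\star}+2(r-1)\ge k+r$ reduces to $2(r-1)\ge r+1-b^{\star}+(k-b^{\star})$, which after using $b^{\star}\ge k-1$ simplifies to $r-1\ge 0$. I would handle the boundary cases (where $r-1$ is large enough that $k_b$ drops to $k-2$, i.e.\ $r-1>k-1$) by noting that then $b^{\star}$ decreases by only one while $2(r-1)$ has already grown past the deficit; a short monotonicity check that each unit increase of $r$ changes the left side $b^{\star}+2(r-1)$ by at least $+1$ closes this.

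The main obstacle I anticipate is \emph{not} the telescoping argument for the left inequality, but rather verifying the right inequality cleanly across \emph{all} admissible values of $r$ in the range $2\le r\le \varkappa$, because the behavior of $b^{\star}$ as $b$ decreases is piecewise: $b^{\star}$ stays constant over a block of consecutive $b$-values and then drops by one at each triangular-number threshold. The cleanest route is to prove the one-variable inequality $b^{\star}+2(\varkappa-b)\ge k+(\varkappa-b+1)$, i.e.\ $b^{\star}\ge k-(\varkappa-b)+1=k+b-\varkappa+1$, for all $1\le b\le \varkappa$, by induction on decreasing $b$: at each threshold where $b^{\star}$ drops by $1$, the quantity $k+b-\varkappa+1$ also drops by $1$, so the gap is preserved, and one only needs to check the base case $b=\varkappa$ (where $b^{\star}=k$ and the inequality reads $k\ge k+1$—wait, this forces re-examining the constant $+1$, so in practice I would track the telescoped bonus as exactly $2(r-1)$ against $k+r$ and verify the base case $r=2$ directly, then induct). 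Verifying this base case and the increment step carefully is where the real attention is required.
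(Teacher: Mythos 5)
Your treatment of the first inequality is exactly the paper's: iterate Lemma \ref{obs4} a total of $r-1$ times, each merge replacing two parts $s,t$ by $s+t-1$ at a gain of $2$, and use $b_1+\cdots+b_r=\varkappa$; your concern about the hypothesis $s\ge t\ge 1$ is harmless since the left side of (\ref{2021-03-16+10}) is symmetric in $s$ and $t$ and all merged values stay $\ge 1$. For the second inequality the paper uses a direct two-case split: for $r\le k-1$ one has $\varkappa-(r-1)\ge\binom{k-1}{2}+1$, hence $(\varkappa-(r-1))^{\star}\ge k$, and $2(r-1)\ge r$; for $r\ge k$ one only needs $(\varkappa-(r-1))^{\star}\ge 2$ together with $2(r-1)\ge k+r-2$. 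Your final route --- anchor at $r=2$, where $(\varkappa-1)^{\star}=k$ gives equality in $k+2(r-1)\ge k+r$, then induct on $r$ using that $b^{\star}$ drops by at most $1$ per unit decrease of $b$ while the bonus $2(r-1)$ grows by $2$ --- is equally valid and covers all $2\le r\le\varkappa$ uniformly. You should, however, discard the intermediate computation in your second paragraph: the bound $b^{\star}\ge k-1$ alone only yields $2(r-1)\ge r+1$, i.e.\ $r\ge 3$ (not $r-1\ge 0$), and the displayed reduction there is mis-simplified; likewise, as you yourself notice, the one-variable inequality $b^{\star}\ge k+b-\varkappa+1$ fails at $b=\varkappa$ and must be started at $b=\varkappa-1$. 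With those corrections your plan is sound; the paper's case split simply avoids the induction.
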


\begin{proof}
The first inequality of (\ref{2021-03-18})  follows from (\ref{2021-03-16+10}) and the identity $b_1+\dots+b_r=\varkappa$. The second inequality is simple. Indeed, for $r\ge k$ the inequality follows from  $2(r-1)\ge k+r-2$ and $(\varkappa-(r-1))^{\star}\ge 2$. For $r\le k-1$ we have $\varkappa-(r-1)\ge {\binom{k-1}{2}}+1$ and therefore $(\varkappa-(r-1))^{\star}\ge k$.
\end{proof}

Now we are ready to bound $h_F$ for $F={\cal K}_k$.

\begin{lem}\label{h_complete} 
Let $k\ge 3$, $0<\alpha\le 2$, and $A>0$. Let $n,m\to+\infty$. 
Assume that $m\le An$. Let $F={\cal K}_k$. Then 
(\ref{2021-06-23+3}) implies the bound 
$h_F=o\left(n^{\frac{1}{\alpha}-k}\right)$. 
Note that for $\alpha=2$ condition  (\ref{2021-06-23+3}) is the 
same as (\ref{2021-06-22+11}).
\end{lem}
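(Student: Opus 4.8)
The plan is to imitate the proof of Lemma~\ref{two_connected}, but to replace the generic vertex-count inequality (\ref{2021-06-28+4}) by the sharper clique-specific bound of Lemma~\ref{E-H}; this is exactly what makes the bound for $F={\cal K}_k$ stronger. Throughout write $\varkappa=\binom{k}{2}=e_F$, so that $\frac{2}{\alpha k(k-1)}=\frac{1}{\alpha e_F}$ and $v_F=k$.

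The first step is to recast the hypothesis (\ref{2021-06-23+3}) as a statement about block sizes. Recall that $b^{\star}=\ell$ precisely when $\binom{\ell-1}{2}+1\le b\le\binom{\ell}{2}$, so that $\hat\ell:=\binom{\ell-1}{2}+1$ is the \emph{smallest} edge count $b$ with $b^{\star}=\ell$. Inside a level $\{b:b^{\star}=\ell\}$, with $\ell$ ranging over $2,\dots,k$, the exponent $b^{\star}-\frac{b}{\alpha e_F}=\ell-\frac{b}{\alpha e_F}$ is non-increasing in $b$, while $Q^{b}\le Q^{\hat\ell}$ because $Q\le 1$ and $b\ge\hat\ell$. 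Hence for $X\ge 1$ we have $X^{b^{\star}-b/(\alpha e_F)}Q^{b}\le X^{\ell-\hat\ell/(\alpha e_F)}Q^{\hat\ell}$, and taking expectations shows that (\ref{2021-06-23+3}) (which is the case $b=\hat\ell$, the least $b$ in the level) upgrades to
\begin{equation*}
\E\!\left(X^{\,b^{\star}-\frac{b}{\alpha e_F}}Q^{b}\right)<\infty \qquad\text{for every integer } 1\le b\le\varkappa. \tag{$\star$}
\end{equation*}
After this reduction the thresholds $\hat\ell$ play no further role, and every block size that can occur in a partition of the $\varkappa$ edges of ${\cal K}_k$ is controlled.

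Next I would run the counting estimate exactly as in Lemma~\ref{two_connected}. From (\ref{2021-06-28+5}), using $({\tilde X})_{v_j}\le{\tilde X}^{v_j}$ and $1/(n)_{v_j}\prec n^{-v_j}$, one gets $h({\tilde B},{\tilde i})\prec n^{-(v_1+\dots+v_r)}\prod_{j=1}^r\E({\tilde X}^{v_j}Q^{b_j})$, and summing over the $(m)_r\asymp n^r$ admissible colorings of a fixed partition ${\tilde B}$ gives $\sum_{\tilde i}h({\tilde B},{\tilde i})\prec\prod_{j=1}^r n^{1-v_j}\E({\tilde X}^{v_j}Q^{b_j})$. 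For each factor I apply (\ref{fact_1}) with condition ($\star$) at $b=b_j$: taking $s=b_j^{\star}-\frac{b_j}{\alpha e_F}$ and $\tau=v_j-s=v_j-b_j^{\star}+\frac{b_j}{\alpha e_F}>0$ (positivity holds since $v_j\ge b_j^{\star}$ and $b_j\ge 1$), I obtain $n^{1-v_j}\E({\tilde X}^{v_j}Q^{b_j})=o\!\left(n^{\,1-b_j^{\star}+b_j/(\alpha e_F)}\right)$. Multiplying over $j$ and using $b_1+\dots+b_r=e_F$ yields $\sum_{\tilde i}h({\tilde B},{\tilde i})=o\!\left(n^{\,r-(b_1^{\star}+\dots+b_r^{\star})+1/\alpha}\right)$, and the bound $b_1^{\star}+\dots+b_r^{\star}\ge k+r$ from (\ref{2021-03-18}) reduces the exponent to $\frac1\alpha-k$. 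Since ${\cal K}_k$ has only finitely many edge partitions ${\tilde B}$, summing over them gives $h_F=o(n^{1/\alpha-k})$; for $\alpha=2$ this is the bound under (\ref{2021-06-22+11}), with which (\ref{2021-06-23+3}) then coincides.

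I expect the main obstacle to be the opening reduction to ($\star$): the hypotheses (\ref{2021-06-23+3}) are supplied only at the thresholds $b=\hat\ell$, yet they must dominate the moments for every intermediate block size, which is precisely what the level structure of $b\mapsto b^{\star}$ together with the monotonicity of $b\mapsto b^{\star}-\frac{b}{\alpha e_F}$ deliver. A minor technical point is the positivity of $s=b_j^{\star}-\frac{b_j}{\alpha e_F}$ required by (\ref{fact_1}); when $s\le 0$ (possible for small $\alpha$) condition ($\star$) is automatic and I would instead invoke the crude bound $\E({\tilde X}^{v_j}Q^{b_j})\le n^{v_j}$, which, since ${\tilde X}\le n$, still produces the exponent $1-b_j^{\star}+b_j/(\alpha e_F)$, the strict $o$ then coming from any factor with $s>0$ or from strict inequality in (\ref{2021-03-18}).
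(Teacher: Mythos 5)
Your proof is correct and follows essentially the same route as the paper's: the same upgrade of (\ref{2021-06-23+3}) to a moment bound for every block size $b$ via the level structure of $b\mapsto b^{\star}$ and $Q\le 1$, the same per-coloring estimate fed through (\ref{fact_1}), and the same use of Lemma~\ref{E-H} to produce the exponent $k+r$ before summing over the finitely many partitions. The only cosmetic difference is that the paper first replaces $v_j$ by $b_j^{\star}$ in both numerator and denominator (using ${\tilde X}\le n$) and then applies (\ref{fact_1}) with $\tau=b_j/(\alpha e_F)$, whereas you keep $v_j$ and take $\tau=v_j-b_j^{\star}+b_j/(\alpha e_F)$; your closing remark on the sign of $s$ addresses a degenerate small-$\alpha$ case that the paper's own application of (\ref{fact_1}) passes over silently.
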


\begin{proof}
For $F={\cal K}_k$ we have $e_F=\binom{k}{2}$.
We observe that  (\ref{2021-06-23+3}) implies
\begin{equation}
\label{2021-06-30+11}
\E\left(X^{b^{\star}-b/(\alpha\, e_F)}Q^b\right) < \infty \qquad
{\text{for each}}
\qquad
 1\le b<\binom{k}{2}.
\end{equation}
Note that ${\hat s}=\binom{s-1}{2}+1$ is the smallest integer $t$ such that  $t^{\star}=s$. In particular, for any $b$ with $b^{\star}=s$ we have $b\ge {\hat s}$. Therefore, given $2\le s\le k$, the moment condition $\E \left( X^{s-{\hat s}/(\alpha\, e_F)}Q^{\hat s} \right) < \infty$ implies $\E \left(X^{s-b/(\alpha\, e_F)}Q^{b}\right) < \infty$ for any $b$ satisfying $b^{\star}=s$. In this way (\ref{2021-06-23+3}) yields (\ref{2021-06-30+11})

Let us bound $h_{{\cal K}_k}$ from above. 
Given a partition  ${\tilde B}=(B_1,\dots, B_r)$ of the edge set ${\cal E}_0$ of ${\cal K}_k=([k],{\cal E}_0)$, let $v_j$ be the number of vertices incident to the edges from $B_j$ and let $b_j=|B_j|$. For any vector ${\tilde  i}=(i_1,\dots, i_r)$ of distinct colors we have
\begin{displaymath}
h({\tilde B},{\tilde i}) \le \prod_{j=1}^r \frac{\E\bigl({\tilde X}^{v_j}Q^{b_j}\bigr)}{n^{v_j}}
\le
\prod_{j=1}^r \frac{\E\bigl({\tilde X}^{b_j^{\star}}Q^{b_j}\bigr)} {n^{b_j^{\star}}}
\le 
\frac{1}{n^{k+r}} \prod_{j=1}^r \E\bigl({\tilde X}^{b_j^{\star}}Q^{b_j}\bigr).
\end{displaymath}
Here the first inequality follows from $({\tilde X})_t/(n)_t\le {\tilde X}^t/n^t$, since ${\tilde X}\le n$. The second inequality follows from the obvious inequality  $b_j^{\star}\le v_j$ and the fact that ${\tilde X}\le n$. The last inequality  follows from the inequality $b_1^{\star}+\cdots+b_r^{\star}\ge k+r$ of Lemma \ref{E-H}.

For each $r$-partition ${\tilde B}$ as above we bound the sum over all possible colorings ${\tilde i}$ (there are $(m)_r$ of them) 
\begin{equation}
\label{2021-06-30+10}
\sum_{{\tilde i}}h({\tilde B},{\tilde i}) \le 
\frac{(m)_r}{n^{k+r}} \prod_{j=1}^r \E\bigl({\tilde X}^{b_j^{\star}}Q^{b_j}\bigr) \le 
\frac{A^r}{n^k} \prod_{j=1}^r \E\bigl({\tilde X}^{b_j^{\star}}Q^{b_j}\bigr) =
o\left(n^{\frac{1}{\alpha}-k}\right).
\end{equation}
In the very last step we used the bounds 
(with $e_F=b_1+\cdots+b_r=\binom{k}{2}$)
\begin{displaymath}
\E\left({\tilde X}^{b_j^{\star}}Q^{b_j}\right) = o\left(n^{\frac{b_j}{\alpha\, e_F}} \right)
\end{displaymath}
that follow from the moment conditions $\E\left(X^{b_j^{\star}-\frac{b_j}{\alpha\, e_F}}Q^{b_j}\right) < \infty$, see (\ref{2021-06-30+11}), via (\ref{fact_1}). Finally, proceeding as in (\ref{2021-06-29+10}) above, we obtain the desired bound $h_F=o\left( n^{\frac{1}{\alpha}-k}\right)$ from (\ref{2021-06-30+10}).
\end{proof}

\subsection{Power-law tails}

Recall that given a graph $F=({\cal V}_F,{\cal E}_F)$, we denote 
by $v_F=|{\cal V}_F|$ the number of vertices and 
by $e_F=|{\cal E}_F|$ the number of edges.
 Let $\Psi_F=\Psi_F(n,p)=n^{v_F}p^{e_F}$, and define
\begin{displaymath}
\Phi_F=\Phi_F(n,p) = \min_{H\subset F,\, e_H\ge 1}\Psi_H, \qquad m_F = \max_{H\subset F,\, e_H\ge 1}(e_H/v_H).
\end{displaymath}
Here the minimum/maximum is taken over all subgraphs 
$H\subset F$ with $e_H\ge 1$.
Recall that $F$ is called balanced if $m_F=e_F/v_F$. 
For a balanced  $F$ we have for any $H\subset F$ with 
$e_H\ge 1$ that
\begin{displaymath}
\Psi_H = \bigl(np^{e_H/v_H}\bigr)^{v_H} \ge 
\bigl(np^{e_F/v_F}\bigr)^{v_H} = \Psi_F^{v_H/v_F}.
\end{displaymath}
Hence 
\begin{equation}\label{2021-06-21} 
\Phi_F \ge\min\{\Psi_F^{2/v_F},\Psi_F\}.
\end{equation}

\begin{lem}
\label{lemma_stable} 
Let $k\ge 3$ be an integer. Let $a>0$ and $0<\alpha<2$. Assume that $F$ is  balanced and connected. 
Assume that
\begin{equation}\label{2021-06-11}
\PP\{ N_F^*>t)=(a+o(1))t^{-\alpha} \qquad {\text{as}} \qquad t\to+\infty. 
\end{equation}
Then 
\begin{equation}
\label{2021-06-11+1}
\PP\{N_F>t\}=(a+o(1))t^{-\alpha} \qquad {\text{ as}} \qquad  t\to+\infty. 
 \end{equation}
\end{lem}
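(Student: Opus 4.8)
The plan is to sandwich $\PP\{N_F>t\}$ between $\PP\{N_F^*>(1\pm\epsilon)t\}$, exploiting that conditionally on $(X,Q)$ the count $N_F$ concentrates around $N_F^*=\E^*(N_F)$. Write $\PP^*(\cdot)=\PP(\cdot\mid X,Q)$ and $\Delta_F^*=N_F-N_F^*$ as in the proof of Remark \ref{remark-1}. Two conditional estimates drive everything: the exponential lower-tail bound $\PP^*\{N_F\le(1-\delta)N_F^*\}\le\exp(-c_\delta\Phi_F(X,Q))$ from \cite{JansonORucinski}, and the conditional variance bound $\E^*(\Delta_F^*)^2\prec (N_F^*)^2/\Phi_F(X,Q)$ (Lemma 3.5 of \cite{JansonLuczakRucinski2001}). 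Since $F$ is balanced, (\ref{2021-06-21}) gives $\Phi_F\ge\min\{\Psi_F^{2/v_F},\Psi_F\}$ with $\Psi_F=X^{v_F}Q^{e_F}$, and because $N_F^*\asymp\Psi_F$ on $\{X\ge v_F\}$ this reads $\min\{(N_F^*)^{2/v_F},N_F^*\}\prec\Phi_F(X,Q)$, the first term dominating for $N_F^*\ge1$. Hence the conditional lower-tail probability is at most $\exp(-c(N_F^*)^{2/v_F})$ and $\E^*(\Delta_F^*)^2\prec\max\{(N_F^*)^{2-2/v_F},N_F^*\}$.

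Fix $\epsilon\in(0,1)$. For the upper bound I split
\[\PP\{N_F>t\}\le\PP\{N_F^*>(1-\epsilon)t\}+\PP\{N_F>t,\ N_F^*\le(1-\epsilon)t\}.\]
On $\{N_F^*\le(1-\epsilon)t\}$ the event $N_F>t$ forces $\Delta_F^*>\epsilon t$, so conditional Chebyshev gives $\PP^*\{N_F>t\}\le\E^*(\Delta_F^*)^2/(\epsilon t)^2$. Taking expectations, the second term is $\prec t^{-2}\,\E\big(\max\{(N_F^*)^{2-2/v_F},N_F^*\}\,{\mathbb I}_{\{N_F^*\le(1-\epsilon)t\}}\big)$. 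The built-in truncation $N_F^*\le(1-\epsilon)t$ is essential: evaluating the truncated power-law moments under (\ref{2021-06-11}) shows this expectation is $O(t^{(2-2/v_F-\alpha)_+})+O(t^{(1-\alpha)_+})$ up to logarithmic factors, so the whole term is $O(t^{-2/v_F-\alpha})+O(t^{-1-\alpha})+O(t^{-2})=o(t^{-\alpha})$, using $v_F\ge3$ and $\alpha<2$. Thus $\PP\{N_F>t\}\le\big(a(1-\epsilon)^{-\alpha}+o(1)\big)t^{-\alpha}$.

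For the lower bound I write
\[\PP\{N_F>t\}\ge\PP\{N_F^*>(1+\epsilon)t\}-\PP\{N_F\le t,\ N_F^*>(1+\epsilon)t\}.\]
On $\{N_F^*>(1+\epsilon)t\}$ the event $N_F\le t$ is a lower deviation by the factor $(1+\epsilon)^{-1}$, so the exponential lower-tail bound gives $\PP^*\{N_F\le t\}\le\exp(-c(N_F^*)^{2/v_F})\le\exp(-c((1+\epsilon)t)^{2/v_F})$; multiplying by $\PP\{N_F^*>(1+\epsilon)t\}=O(t^{-\alpha})$ makes the second term $e^{-\Theta(t^{2/v_F})}O(t^{-\alpha})=o(t^{-\alpha})$. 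Hence $\PP\{N_F>t\}\ge\big(a(1+\epsilon)^{-\alpha}+o(1)\big)t^{-\alpha}$. Combining the two bounds, dividing by $t^{-\alpha}$, letting $t\to+\infty$ and then $\epsilon\to0$, yields $t^{\alpha}\PP\{N_F>t\}\to a$, which is (\ref{2021-06-11+1}).

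I expect the upper-deviation event $\{N_F>t,\ N_F^*\le(1-\epsilon)t\}$ to be the main obstacle, because sharp upper-tail large deviations for subgraph counts (the ``infamous upper tail'') are notoriously delicate. The key realisation is that we never need them: a crude conditional Chebyshev bound suffices once the event itself supplies the truncation $N_F^*\le(1-\epsilon)t$ that keeps the relevant power-law moment finite, and the exponent $2-2/v_F$ produced by the balanced-graph variance bound stays strictly below $2$, which is exactly what forces the error term below $t^{-\alpha}$.
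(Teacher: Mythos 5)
Your proof is correct, and for the hard half it takes a genuinely different (and lighter) route than the paper. The paper also splits along the deviation $\Delta_F^*=N_F-N_F^*$ (its $P_1,P_2,P_3$ in (\ref{2021-04-02+2}) is essentially your sandwich), and for the lower deviation it, like you, runs Janson's inequality conditionally on $(X,Q)$ — though it derives the needed bound from scratch via an explicit estimate of $\bar\delta$, whereas you invoke the packaged form $\PP^*\{N_F\le(1-\delta)N_F^*\}\le e^{-c_\delta\Phi_F(X,Q)}$; note that this lower-tail bound lives in \cite{JansonLuczakRucinski2001} (Theorem 3.9 there), not in \cite{JansonORucinski}, and depending on which version one quotes the exponent may come out as $\Theta(t^{1/v_F})$ rather than $t^{2/v_F}$ — immaterial, since any superpolynomial decay suffices. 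The real divergence is in the upper deviation: the paper deploys the exponential upper-tail bounds of Janson--Oleszkiewicz--Ruci\'nski (the function $M_F$, fractional independence numbers, and a case analysis over the events ${\cal A}_1,{\cal A}_{21},{\cal A}_{22},{\cal B}_1,{\cal B}_2$), reserving Chebyshev only for the regimes where $\Psi_F(X,Q)$ is at most polylogarithmic. You observe that none of this is needed: the event $\{N_F>t,\ N_F^*\le(1-\varepsilon)t\}$ itself truncates $N_F^*$ at $(1-\varepsilon)t$, and since the balanced-graph variance bound gives $\E^*(\Delta_F^*)^2\prec\max\{(N_F^*)^{2-2/v_F},N_F^*\}$ with both exponents strictly below $2$, the truncated moment divided by $t^2$ is $o(t^{-\alpha})$ for every $\alpha<2$. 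I checked the exponent bookkeeping in both cases ($\beta>\alpha$ giving $t^{-2/v_F-\alpha}$ or $t^{-1-\alpha}$, and $\beta\le\alpha$ giving $t^{-2}\log t$) and it is sound, as is the reduction $\Phi_F\succ\min\{(N_F^*)^{2/v_F},N_F^*\}$ via $N_F^*\asymp\Psi_F$ on $\{X\ge v_F\}$. What your approach buys is a substantially shorter proof that never touches the ``infamous upper tail''; what the paper's buys is, arguably, nothing extra here — its exponential treatment of the regime $\Psi_F>B\ln^k s$ is subsumed by your single Chebyshev step. Also note your sandwich removes the need for the paper's auxiliary parameter $\eta$ and the final $\eta\to0$ limit in (\ref{2022-02-17++}).
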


We remark that for $0<\alpha<2$, the tail asymptotics
(\ref{2021-06-11+1}) implies that $N_F$ belongs to the domain of 
attraction of an $\alpha$-stable distribution. Indeed, the left tail of  
$N_F$ vanishes since  $\PP\{N_F\ge 0\}=1$. Therefore, the 
conditions of Theorem 2 in $\S$ $35$  Chapter 7 of 
\cite{GnedenkoKolmogorov} are satisfied. 

\begin{proof}
We denote the conditional expectation and probability given 
$(X,Q)$ by $\E^*$ and $\PP^*$.  Furthermore, we 
denote $k=v_F$ and
$\Delta_F^* = N_F-N_F^*$. In the proof we often use the fact 
(see Lemma 3.5 of \cite{JansonLuczakRucinski2001}) that
\begin{equation}\label{2021-06-21+3}
\E^*(\Delta_F^*)^2\asymp \frac{(N_F^*)^2}{\Phi_F(X,Q)} (1-Q).
\end{equation}
We also use the simple relation 
$N_F^*\asymp a_F\Psi_F(X,Q)$.

To prove (\ref{2021-06-11+1}) we show that the contribution of $\Delta_F^*$  to the sum $N_F=N_F^*+\Delta_F^*$ is negligible compared to $N_F^*$ and, therefore, the tail asymptotic (\ref{2021-06-11+1}) is determined by (\ref{2021-06-11}). For this purpose we apply exponential large deviation bounds for subgraph counts in Bernoulli random graphs \cite{JansonLuczakRucinski2001, JansonORucinski}.

Given large $t>0$ and small $\varepsilon>0$, introduce event ${\cal H} = \{-\varepsilon N_F^*\le \Delta_F^*\le \varepsilon t\}$ and split
\begin{eqnarray}\nonumber
\PP\{N_F>t\} &=& \PP\{N_F>t, {\cal H}\} + \PP\{N_F>t, \Delta_F^*<-\varepsilon N_F^*\} +
\PP\{N_F>t, \Delta_F^*>\varepsilon t\} \\
\label{2021-04-02+2}
&=:& P_1+P_2+P_3.
\end{eqnarray}
We first consider $P_1$. Replacing $\Delta_F^*$ by its extreme values (on  ${\cal H}$) yields the inequalities
\begin{equation}\label{2021-04-02+1}
\PP\{(1-\varepsilon)N_F^*>t, {\cal H}\} \le P_1 \le \PP\{N_F^*>t(1-\varepsilon), {\cal H}\}.
\end{equation}
We note that the right side 
of (\ref{2021-04-02+1}) is at most $\PP\{N_F^*>t(1-\varepsilon)\}$ and the left side is at least
\begin{displaymath}
\PP\{(1-\varepsilon)N_F^*>t\} - P_2'-P_3',
\end{displaymath}
where 
\begin{displaymath}
P_2' := \PP \{ (1-\varepsilon)N_F^*>t, \Delta_F^*<-\varepsilon N_F^*\}, \qquad
P_3' := \PP\{(1-\varepsilon)N_F^*>t, \Delta_F^*>\varepsilon t\}.
\end{displaymath}
Hence, we have 
\begin{equation}
\label{2022-02-15}
\PP\{(1-\varepsilon)N_F^*>t\} - P_2'-P_3' \le P_1 \le \PP\{N_F^*>t(1-\varepsilon)\}.
\end{equation}
Invoking the simple inequalities $P_2\le P_2'$ and $P_3'\le P_3$, we obtain from  (\ref{2021-04-02+2}), {\color{blue}(\ref{2022-02-15})}
  that
\begin{equation}\label{2021-06-11+10}
 \PP\{(1-\varepsilon)N_F^*>t\} - P_2'  \le \PP\{N_F>t\} \le \PP\{N_F^*>t(1-\varepsilon)\} +P'_2+P_3.
\end{equation}
 We show below that for any $0<\varepsilon<1$ 
\begin{equation}\label{2021-04-02+3}
 P_2'=o(t^{-\alpha}) \qquad \text{and} \qquad P_3=o(t^{-\alpha}) \qquad {\text{as}}  \qquad t\to+\infty.
 \end{equation}
 Note that (\ref{2021-06-11}) and (\ref{2021-06-11+10}) together with (\ref{2021-04-02+3}) imply (\ref{2021-06-11+1}). It remains to show (\ref{2021-04-02+3}).
 
\medskip

 {\it Proof of} $ P_2'=o(t^{-\alpha})$. 
 Given  $(X,Q)$ with $0<Q<1$ (cases 0 and 1 are trivial), 
 we apply Janson's  inequality (Theorem 2.14 of 
 \cite{JansonLuczakRucinski2001}) 
 to 
 $p^*_{\varepsilon} :=\PP^*\{\Delta_F^*<-\varepsilon N_F^*\}$. 
 In what follows, we assume that the random graph  $G(X,Q)$ and 
 complete graph ${\cal K}_{X}$ are both defined on the same vertex set of size $X$ and that $X\ge 1$. Let
\begin{displaymath}
{\overline \delta} := \E^*\bigl(N_F^2\bigr) - \delta, \qquad 
\delta := \sum_{F'\subset {\cal K}_{X}} \sum_{
\begin{subarray}{c}
 F''\subset {\cal K}_{X} \\
{\cal E}_{F'}\cap {\cal E}_{F''}=\emptyset
\end{subarray}
}
\E^* ({\mathbb I}_{F'}{\mathbb I}_{F''}).
\end{displaymath}
Here the sum runs over ordered pairs $(F',F'')$ of subgraphs of 
${\cal K}_{X}$  such that $F'$ and $F''$ are copies of $F$ and their 
edge sets ${\cal E}_{F'}$ and ${\cal E}_{F''}$ are disjoint. 
Furthermore, ${\mathbb I}_{F'}$ stands for the indicator of the 
event that $F'$ is present in $G(X,Q)$. Janson's inequality implies 
\begin{equation}
\label{Janson}
 \PP^*\{\Delta_F^*<-\eta N_F^*\} \le e^{-(\eta N_F^*)^2/{\bar \delta}}, \qquad \forall \ \eta\in(0,1).
\end{equation}

Next we bound ${\bar \delta}$ from above. The (variance) identity  $\E^*(N_F^2)-(N_F^*)^2=\E^*(\Delta_F^*)^2$ implies 
\begin{equation}\label{2021-06-14}
{\overline \delta} = \E^*(\Delta_F^*)^2+(N_F^*)^2 - \delta.
\end{equation}
Furthermore, using the observation 
that $V_{F'}\cap V_{F''}=\emptyset$ implies 
${\cal E}_{F'}\cap {\cal E}_{F''}=\emptyset$, and that the latter 
relation implies 
$\E^* ({\mathbb I}_{F'}{\mathbb I}_{F''}) = (\E^* {\mathbb I}_{F'})(\E^*{\mathbb I}_{F''})=Q^{2e_F}$ we bound $\delta$ from below:
\begin{align*}
\delta \ge \sum_{F'\subset {\cal K}_{X}} \sum_{
\begin{subarray}{c}
 F''\subset {\cal K}_{X} \\
V_{F'}\cap V_{F''}=\emptyset
\end{subarray}
}
\E^* ({\mathbb I}_{F'}{\mathbb I}_{F''}) 
=
 a_F^2\binom{X}{k}\binom{X-k}{k}Q^{2e_F}
 =
\frac{(X-k)_k}{(X)_k}
 (N_F^*)^2.
\end{align*}
Then we lower bound the fraction
\begin{displaymath}
\frac{(X-k)_k}{(X)_k} 
\ge 
\left(1-\frac{k}{X-k}\right)^k 
\ge 1-\frac{k^2}{X-k}, \qquad {\text{for}} \quad X\ge 2k,
\end{displaymath}
and obtain that $\delta\ge (N_F^*)^2(1-k^2(X-k)^{-1})$. Invoking this bound in (\ref{2021-06-14}) we obtain 
\begin{displaymath}
{\bar \delta} \le \E^*(\Delta_F^*)^2+(N_F^*)^2k^2(X-k)^{-1}.
\end{displaymath}  
Hence the ratio in the exponent of (\ref{Janson}) satisfies
\begin{equation}
\label{2021-06-15+1}
\frac{(N_F^*)^2}{{\bar\delta}} 
\ge 
\frac{(N_F^*)^2} 
{2
\max\{\E^*(\Delta_F^*)^2, (N_F^*)^2k^2(X-k)^{-1}\}
}
 = 
 \frac12 \min\left\{\frac{(N_F^*)^2} {\E^*(\Delta_F^*)^2},\frac{X-k}{k^2}\right\}.
\end{equation}
We will show below that there exists $c_k>0$ (independent of $t$)
such that $N_F^*>t$ implies 
\begin{equation}\label{2021-06-15+2}
\frac{(N_F^*)^2} {\E^*(\Delta_F^*)^2}>c_kt^{2/k}.
\end{equation}
We also note that $N_F^*>t$ implies $X>(t/a_F)^{1/k}$ 
(we use 
$a_F\binom{X}{k}
\ge 
a_F\binom{X}{k}Q^{e_F}=N_F^*$). Therefore, 
on the event $N_F^*>t$
the right side of (\ref{2021-06-15+1}) is at least 
\begin{equation}
\label{2022-02-16}
\frac{1}{2}
\min
\left\{
c_kt^{2/k}, \,\frac{ (t/a_F)^{1/k}-k}{k^2}
\right\}
\end{equation}
 and this quantity scales as $t^{1/k}$ as $t\to+\infty$.
 Finally, from (\ref{Janson}), (\ref{2021-06-15+1}), 
 (\ref{2022-02-16}) we obtain that on the event 
 $N_F^*>t$,
\begin{displaymath}
p_{\varepsilon}^* 
\le 
e^{-\varepsilon^2\Theta(t^{1/k})} 
= 
o(t^{-\alpha})\qquad {\text{as}} \qquad t\to+\infty.
\end{displaymath}
 We conclude that $P_2'=o(t^{-\alpha})$.
  It remains to show (\ref{2021-06-15+2}). 
 We observe that inequalities $N_F^*\le a_F\Psi_F(X,Q)$ and $N_F^*>t$ imply  $\Psi_F(X,Q)>t/a_F>1$, where the last inequality holds for $t>a_F$.
 Then (\ref{2021-06-21}) 
 implies $\Phi_F(X,Q)\ge(\Psi_F(X,Q))^{2/k}$ and
  (\ref{2021-06-21+3}) implies
\begin{displaymath}
\frac{(N_F^*)^2}{\E^*(\Delta_F^*)^2} 
\asymp 
\frac{\Phi_F(X,Q)}{1-Q}
\ge 
\Phi_F(X,Q)
\ge
\Psi_F^{2/k}(X,Q)
\ge 
(t/a_F)^{2/k}.
\end{displaymath}
\medskip

{\it Proof of} $P_3=o(t^{-\alpha})$.
In the proof we  apply exponential inequalities for upper tails of 
subgraph counts in Bernoulli random graphs \cite{JansonORucinski}. 
For reader's convenience, we state the result of 
\cite{JansonORucinski} we will use. Let $\Delta_F$ be 
the maximum degree of $F$. Let
\begin{displaymath}
M_F(n,p) 
= 
\begin{cases}
1 
\qquad \qquad \qquad \qquad \qquad \qquad 
{\text{if}} 
\quad 
p<n^{-1/m_F}, 
\\
\min_{H\subset F}\bigl(\Psi_H(n,p)\bigr)^{1/\alpha_H^*} 
\qquad \, 
{\text{if}} 
\quad 
n^{-1/m_F}
\le 
p\le n^{-1/\Delta_F}, 
\\
n^2p^{\Delta_F} 
\qquad \qquad \quad \qquad \qquad \ \ \, 
{\text{if}} 
\quad 
p
\ge 
n^{-1/\Delta_F}.
\end{cases}
\end{displaymath}
Here $\alpha_H^*$ is the fractional independence number of 
 a graph $H$, see \cite{JansonORucinski}. 
We do not define the fractional independence number 
here as we only use  the upper bound $\alpha_H^*\le v_H-1$ that holds for any $H$ 
with $e_H>0$, see formula (A.1) in \cite{JansonORucinski}. 
Let $\xi_F$ be the number of copies of $F$ in $G(n,p)$. By 
Theorems 1.2 and 1.5 of \cite{JansonORucinski}, for any $\eta>0$ 
there exists 
 $c_{\eta, F}>0$
 such that uniformly in $p$ and $n\ge k$ 
(recall that $k=v_F$ is the number of vertices of $F$) 
we have
\begin{equation}\label{JOR}
\PP\{\xi_F\ge (1+\eta)\E \xi_F\} \le e^{-c_{\eta, F}M_F(n,p)}.
\end{equation}
We will  apply (\ref{JOR}) to the number $N_F$ of copies of $F$ in $G(X,Q)$ conditionally given $X,Q$, see (\ref{2021-06-22+1}) below.

We write, for short, $s=\varepsilon t$ and estimate 
$P_3\le \PP\{\Delta_F^*>s\}$. 
Let $\eta>0$. We split
\begin{displaymath}
\PP\{\Delta_F^*>s\} =
\PP\{\Delta_F^*>\eta N_F^*, \Delta_F^*>s\} + 
\PP\{\Delta_F^*\le\eta N_F^*, \Delta_F^*>s\}
=:P_{31}+P_{32}
\end{displaymath}
and estimate the  probabilities $P_{31}$ and $P_{32}$ separately.
The second probability
\begin{equation}\label{2021-06-22+3}
P_{32} \le \PP\{N_F^*>s/\eta\}=\eta^{\alpha}(a+o(1))s^{-\alpha}
\end{equation}
can be made negligibly small by choosing $\eta$ arbitrarily small.

Now we upper  bound the 
 remaining probability
$P_{31}$. Introduce the events
\begin{displaymath}
{\cal A}_1=\bigl\{Q\le X^{-1/m_F}\bigr\},
\qquad \! \! \!
{\cal A}_{21} = \bigl\{X^{-1/m_F}< Q< X^{-1/\Delta_F}\bigr\},
\qquad \! \! \!
{\cal A}_{22} = \bigl\{Q\ge X^{-1/\Delta_F}\bigr\},
\end{displaymath} 
and put ${\cal A}_2={\cal A}_{21}\cup{\cal A}_{22}$ 
(note that $\Delta_F\ge 2m_F= 2e_F/v_F$).
 We split 
\begin{equation} 
P_{31} = {\tilde P}_1+{\tilde P}_2, \qquad
{\tilde P}_i 
:
= \PP\{\Delta_F^*>\eta N_F^*, \Delta_F^*>s, {\cal A}_i\}
\end{equation}
and  estimate ${\tilde P}_1$ and ${\tilde P}_2$ 
 separately.
We firstly consider  ${\tilde P}_1$. The inequality 
$Q\le X^{-1/m_F}$ implies $\Psi_F(X,Q)\le 1$. Consequently,
(\ref{2021-06-21}) implies $\Phi_F(X,Q)\ge \Psi_F(X,Q)$.
The latter inequality together with  (\ref{2021-06-21+3}) imply 
$\E^* (\Delta_F^*)^2 \le c_k\Psi_F(X, Q)\le c_k$ for some $c_k>0$. Hence, on the event ${\cal A}_1$ we have $\E^* (\Delta_F^*)^2\le c_k$. Finally, by Markov's inequality,
\begin{equation}\label{2021-06-22+4}
{\tilde P}_1 \le \PP\{\Delta_F^*>s, {\cal A}_1\} = 
\E \bigl({\mathbb I}_{{\cal A}_1} \E^*{\mathbb I}_{\{\Delta_F^*>s\}} \bigr) \le
\E \bigl( {\mathbb I}_{{\cal A}_1}\E^*(\Delta^*_F)^2s^{-2}\bigr) \le 
c_ks^{-2}.
\end{equation}

We secondly consider ${\tilde P}_2$. The inequality 
$X^{-1/m_F}< Q$ implies $\Psi_F(X,Q)>1$. For 
balanced $F$ this yields $\Psi_H(X,Q)>1$ for every 
$H\subset F$ with $e_H>0$. Then, 
by using $\alpha^*_H\le v_H-1$ we obtain 
\begin{displaymath}
\min_{H\subset F :\, e_H>0} 
\bigl(\Psi_H(X,Q)\bigr)^{1/\alpha_H^*}
\ge 
\min_{H\subset F:\, e_H>0} \bigl(\Psi_H(X,Q)\bigr)^{1/v_H} 
= 
\bigl(\Psi_F(X,Q)\bigr)^{1/v_F}.
\end{displaymath}
In the last step we used the fact that $F$ is balanced  once again.
Hence, on the event ${\cal A}_{21}$ we have (recall that $v_F=k$)
\begin{equation}\label{2021-06-22}
M_F(X,Q)\ge \bigl(\Psi_F(X,Q)\bigr)^{1/k}.
\end{equation} 
We observe that (\ref{2021-06-22}) holds
on the event ${\cal A}_{22}$ as well. Indeed, the inequality 
$Q\ge X^{-1/\Delta_F}$ yields 
$M_F(X,Q)\ge X^2Q^{\Delta_F}\ge X$. 
Now the  inequality $X^{v_F}\ge \Psi_F(X,Q)$ 
implies (\ref{2021-06-22}). 

From (\ref{JOR}) and (\ref{2021-06-22}) we obtain the exponential bound
\begin{equation}\label{2021-06-22+1}
\PP^*\{\Delta_F^*>\eta N_F^*\} 
\le  
e^{-c_{\eta, F}M_F(X,Q)} 
\le
e^{-c_{\eta, F}\bigl(\Psi_F(X,Q)\bigr)^{1/k}}.
\end{equation}

Let us bound ${\tilde P}_2$ from above. We fix a (large) number $B>0$ and introduce the events
\begin{displaymath}
{\cal B}_1=\{\Psi_F(X_1,Q_1)>B\ln^k s\}, \qquad 
{\cal B}_2=\{\Psi_F(X_1,Q_1)\le B\ln^k s\}.
\end{displaymath}
We then split
\begin{displaymath}
{\tilde P}_2 = {\tilde P}_{21} + {\tilde P}_{22}, \qquad
{\tilde P}_{2i} = 
\PP\{\Delta^*_F>\eta N_F^*, \Delta_F^*>s, {\cal A}_2,{\cal B}_i\},
\end{displaymath}
and  bound ${\tilde P}_{21}$ from above, by using (\ref{2021-06-22+1}),
\begin{eqnarray}
\nonumber
{\tilde P}_{21} &\le& \PP\{\Delta_F^*>\eta N_F^*, {\cal A}_2, {\cal B}_1\}
=
\E \left( {\mathbb I}_{{\cal B}_1}{\mathbb I}_{{\cal A}_2} \PP^*\{\Delta_F^*>\eta N_F^*\}\right) \\
\label{2021-06-22+5}
&\le&
\E \left({\mathbb I}_{{\cal B}_1} e^{-c_{\eta,F}(\Psi_H(X_1,Q_1))^{1/k}} \right) \le 
e^{-c_{\eta,F}B^{1/k}\ln s}.
\end{eqnarray}

It remains to upper bound ${\tilde P}_{22}$. 
The inequality $\Psi_F(X,Q)>1$, which holds on the event 
${\cal A}_2$, implies (see (\ref{2021-06-21}))
$\Phi_F(X,Q)\ge (\Psi_F(X,Q))^{2/k}$. Furthermore,   (\ref{2021-06-21+3}) implies
\begin{displaymath}
\E^* (\Delta_F^*)^2 
\le 
c_F
\bigl( \Psi_F(X,Q) \bigr)^{2-(2/k)}(1-Q),
\end{displaymath}
where $c_F>0$ depends only on $F$.
Note that on the event ${\cal B}_2$ the right side is upper bounded by 
$c_F(B\ln^k s)^{2-(2/k)}$.
Hence, by Markov's inequality,
\begin{displaymath}
\PP^*(\Delta_F^*>s) 
\le 
s^{-2}\E^*(\Delta_F^*)^2
\le 
c_FB^{2-(2/k)}s^{-2}\ln^{k-2}s.
\end{displaymath}
Finally, we obtain
\begin{eqnarray}
\label{2021-06-22+6}
{\tilde P}_{22} 
\le 
\PP\{\Delta_F^*>s, {\cal A}_2, {\cal B}_2\} 
=
\E \bigl( {\mathbb I}_{{\cal A}_2}
{\mathbb I}_{{\cal B}_2} 
\PP^*\{\Delta_F^*>s\} \bigr)
\le 
c_FB^{2-(2/k)}s^{-2}\ln^{2k-2}s.
\end{eqnarray}

\smallskip

We complete the proof by showing  that for any  $0<\varepsilon<1$ the probability $P_3$, which depends on $\varepsilon$, satisfies  $P_3=o(t^{-\alpha})$ as $t\to+\infty$. 
 Recall that $s=\varepsilon t$. We have  
for any $\eta>0$
\begin{eqnarray}
\nonumber
&&
\limsup_{t\to+\infty}t^{\alpha}P_3 \le 
\limsup_{t\to+\infty} t^{\alpha}\PP\{\Delta_F^*>\varepsilon t\} =
\varepsilon^{-\alpha} \limsup_{s\to+\infty} s^{\alpha} \PP\{\Delta_F^*>s\} \\
\label{2022-02-17++}
&&
\le \varepsilon^{-\alpha} \limsup_{s\to+\infty} s^{\alpha}
({\tilde P}_1+{\tilde P}_{21}+{\tilde P}_{22}+P_{32}) \le 
(\eta/\varepsilon)^{\alpha}a.
\end{eqnarray}
Hence $\limsup_{t\to+\infty}t^{\alpha}P_3=0$.
The last inequality of (\ref{2022-02-17++}) follows from 
(\ref{2021-06-22+3}), (\ref{2021-06-22+4}), (\ref{2021-06-22+5}), and (\ref{2021-06-22+6}). Indeed, given $\eta>0$, we choose $B=B(\eta)$ (in (\ref{2021-06-22+5}), (\ref{2021-06-22+6})) large enough so that $c_{\eta,F}B^{1/k}>2$. Then ${\tilde P}_{21}\le s^{-2}$ and $\limsup_{s}s^{\alpha}{\tilde P}_{21}=0$. We also mention the obvious relations $\limsup_{s}s^{\alpha}{\tilde P}_{1}=0$ and $\limsup_{s}s^{\alpha}{\tilde P}_{22}=~0$.
\end{proof}

\end{document}